\setlist[enumerate, 1]{\sc(1)}
\def\z{\mathfrak{z}}
\def\b{\mathfrak{b}}
\def\g{\mathfrak{g}}
\def\a{\mathfrak{a}}
\def\so{\mathfrak{so}}
\def\X{\mathfrak{X}}
\def\B{\mathcal{B}}
\def\l{\ell}
\def\C{\mathbb{C}}
\def\R{\mathbb{R}}
\def\Q{\mathbb{Q}}
\def\Z{\mathbb{Z}}
\def\N{\mathbb{N}}
\newcommand{\GL}{{\sf GL}}
\newcommand{\SL}{{\sf SL}}
\newcommand{\SU}{{\sf SU}}
\def\div{\operatorname{div}}
\def\d{\operatorname{d}}
\def\ad{\operatorname{ad}}
\def\I{\operatorname{I}}
\def\diag{\operatorname{Diag}}
\newcommand{\End}{\operatorname{End}}
\newcommand{\Aut}{\operatorname{Aut}}
\DeclareMathOperator{\Iso}{Iso}
\DeclareMathOperator{\hol}{Hol}
\def\alt{\raise1pt\hbox{$\bigwedge$}}
\def\la{\langle}
\def\ra{\rangle}
\def\multiset#1#2{\ensuremath{\left(\kern-.3em\left(\genfrac{}{}{0pt}{}{#1}{#2}\right)\kern-.3em\right)}}
\renewcommand{\d}[1]{\ensuremath{\operatorname{d}\!{#1}}}
\DeclareSymbolFont{extraup}{U}{zavm}{m}{n}
\theoremstyle{plain}
\newtheorem{teo}{\bf Theorem}[section]
\newtheorem{coro}[teo]{\bf Corollary}
\newtheorem{prop}[teo]{\bf Proposition}
\newtheorem{lema}[teo]{\bf Lemma}
\theoremstyle{definition}
\newtheorem{defi}[teo]{\bf Definition}
\theoremstyle{remark}
\newenvironment{obs}
{\pushQED{\qed}\obsx}
\newcommand{\matriz}[1]{\ensuremath{\begin{pmatrix}#1\end{pmatrix}}}
\newcommand*{\derp}[3][]{\ensuremath{\frac{\partial^{#1} #2}{\partial #3}}}
\newcommand{\comillas}[1]{``#1''}
\title{$G_2$-structures on flat solvmanifolds}
\author{Alejandro Tolcachier}
\email{atolcachier@famaf.unc.edu.ar}
\address{FAMAF-CIEM (CONICET), Universidad Nacional de Cordoba, Av. Medina Allende s/n, Ciudad Universitaria, X5000HUA, Córdoba, Argentina.}
\subjclass[2020]{20H15, 22E40, 53C25, 53C29}
\keywords{Bieberbach group, $G_2$ structure, torsion-free, flat solvmanifold, holonomy}
\thanks{The author was partially supported by CONICET, SECyT-UNC, and MATHAMSUD Regional Programme 21-MATH-06.}
\begin{document}
	
	\renewcommand{\bibname}{References}
	
	\begin{abstract}
	In this article we study the relation between flat solvmanifolds and $G_2$-geometry. First, we give a classification of 7-dimensional flat splittable solvmanifolds using the classification of finite subgroups of $\GL(n,\Z)$ for $n=5$ and $n=6$. Then, we look for closed, coclosed and divergence-free $G_2$-structures compatible with the flat metric on them. In particular, we provide explicit examples of compact flat manifolds with a torsion-free $G_2$-structure whose finite holonomy is cyclic and contained in $G_2$, and examples of compact flat manifolds admitting a divergence-free $G_2$-structure.
	\end{abstract}
	
	\maketitle

	
	\section{Introduction}
	A $G_2$-structure on a 7-dimensional manifold $M$ is a globally defined 3-form $\varphi$ which can be pointwise written as
	\[\varphi=e^{123}+e^{145}+e^{167}+e^{246}-e^{257}-e^{347}-e^{356},\] with respect to a suitable basis $\{e^1,\ldots,e^7\}$ of the cotangent space where $e^{ijk}$ denotes $e^i \wedge e^j \wedge e^k$. Such a 3-form $\varphi$ induces a Riemannian metric $g_\varphi$, a Hodge star $\star_\varphi$ and a volume form $\operatorname{vol}_\varphi$ on $M$.
	
	$G_2$-structures can be divided into classes, which are characterized by the expression of the exterior derivatives $d\varphi$ and $d\star_\varphi \varphi$ \cite{FG82}. A $G_2$-structure is called \textit{closed} if $d\varphi=0$ and \textit{coclosed} if $d\star_\varphi \varphi=0$.
	
	The intrinsic torsion of a $G_2$-structure $\varphi$ can be identified with the covariant derivative $\nabla^\varphi \varphi$, where $\nabla^\varphi$ is the Levi-Civita connection of $g_\varphi$. By a classical theorem of Fernández-Gray \cite{FG82}, $\nabla^\varphi \varphi$ vanishes if and only if  $\d\varphi=0$ and $\d\star_\varphi \varphi=0$. In this case the $G_2$-structure $\varphi$ on $M$ is called \textit{torsion-free}. 
	
	The importance of torsion-free $G_2$-structures comes both from its historical relevance and its nice topological properties. In 1955, Berger's classification theorem \cite{Berger} suggested that $G_2$ might possibly be the holonomy group of certain Riemannian 7-manifolds. However, not a single example of such a manifold had yet been discovered until 1984. The first examples of non-compact 7-manifolds with holonomy $G_2$ were constructed by Bryant \cite{Bry87}. Around 1994, Joyce \cite{Joyce} found the first examples in the compact case. He proved also that $(M,\varphi)$ is torsion-free if and only if $\operatorname{Hol}(g_\varphi)\subset G_2$, and in the compact case the equality holds if and only if $\pi_1(M)$ is finite \cite{Joyce}. Moreover, when the $G_2$-structure is torsion-free, the induced metric $g_\varphi$ is Ricci-flat. Thus, according to \cite{AK}, if $g_\varphi$ is homogeneous then $g_\varphi$ is flat. 
	
	One possible approach to find torsion-free $G_2$-structures is to construct a flow of $G_2$-structures which under certain conditions would converge to a torsion-free one. This approach was originally taken by Bryant when he introduced the Laplacian flow of closed $G_2$-structures \cite{Bry05}. Later, Karigiannis, McKay and Tsui introduced the Laplacian coflow for coclosed $G_2$-structures \cite{KMMP12}. These two flows share the property that the fixed points are precisely torsion-free $G_2$-structures in both cases. This highlights the importance of finding closed and coclosed $G_2$-structures. Another type of flows which have been considered recently are isometric flows of $G_2$-structures, that is, flows that preserve the metric, while modifying the $G_2$-structure (a survey of recent progress can be seen in \cite{Gri20p}). For instance, one can consider the evolution of the 3-form $\varphi$ via the equation  \begin{equation}\label{flow}
		\begin{cases}
			\derp{\varphi(t)}{t} =\iota_{\div T_{\varphi(t)} } (\star_{\varphi(t)} \varphi(t)) \\
			\varphi(0)=\varphi_0,
		\end{cases}
	\end{equation} 
where the vector field $\div T_{\varphi}$ is the \textit{divergence} of the full torsion tensor $T_\varphi$ (see \eqref{torsion} below). It is clear that $G_2$-structures with $\div T_\varphi=0$ are critical points of \eqref{flow}. It is known that closed and also coclosed $G_2$-structures satisfy $\div T_\varphi=0$ (see for instance \cite{Gri20p}). 

Our aim in this article is to study the existence of closed, coclosed, torsion-free and also divergence-free $G_2$-structures in the world of flat solvmanifolds. 

A solvmanifold is defined as a compact homogeneous space $\Gamma\backslash G$ of a simply connected
solvable Lie group $G$ by a discrete subgroup $\Gamma$. Solvmanifolds generalize the well known
nilmanifolds which are defined similarly when $G$ is nilpotent. Both nilmanifolds and solvmanifolds
have provided a large number of examples and counterexamples in differential
geometry. For instance, the first example of a symplectic manifold without Kähler structure,
the so-called “Kodaira–Thurston manifold”, is a four dimensional nilmanifold \cite{Th}.
However, many important global properties of nilmanifolds cannot be generalized to solvmanifolds,
and for this reason these manifolds are currently widely studied.

On the other hand, compact flat manifolds are well understood due to the three classical
Bieberbach’s theorems and they have been used to study different phenomena in
geometry. For instance, questions about isospectrality (see \cite{MiaRo} and the references
therein), Kähler flat metrics with holonomy in $\SU(n)$ \cite{Dekimpe}, among others. The class of flat solvmanifolds lies in the intersection between the two theories of solvmanifolds and compact flat manifolds, and thus provide a
nice interplay between them. Also, this class is rich enough to produce a diverse
collection of examples. We will focus on a particular class of flat solvmanifolds, namely the splittable ones, which have a certain structure that allows them to be classified in a systematic way.

In Section 3 we will follow an approach considered in \cite{Tol1,Tol2} by the author to classify the $n$-dimensional splittable flat solvmanifolds for $n\leq 6$. We imitate the ideas to classify 7-dimensional splittable flat solvmanifolds which will serve as very explicit examples to our purpose of studying $G_2$-geometry. The classification is divided in two cases, according to whether we start from an almost abelian Lie algebra $\R\ltimes \R^6$ or a non almost abelian Lie algebra $\R^2\ltimes \R^5$.

Section 4 is devoted to studying the existence of $G_2$-structures in the almost abelian and non almost abelian cases. In the former case, we find examples of compact flat manifolds equipped with a torsion-free $G_2$-structure satisfying that the holonomy group of the underlying metric is cyclic, finite and contained in $G_2$. In the latter case we prove that there are no closed $G_2$-structures, meanwhile all 7-dimensional splittable flat solvmanifolds admit $G_2$-structures which are coclosed and divergence-free, respectively.
 
 \medskip
 
\textsl{Acknowledgments:} Special thanks go to Jorge Lauret. This article originated from a suggestion of his. Also, I want to thank Agustín Garrone, Andrés Moreno and Henrique Sá Earp for very fruitful conversations. Thanks to my advisor Adrián Andrada for his careful reading of the article. Finally, I'm very grateful to the IMEC at UNICAMP for the warm hospitality during my visit.
	\section{Preliminaries}
	\subsection{Flat solvmanifolds}
	In \cite{Mi}, Milnor characterized those Lie groups which admit a flat left invariant metric and he showed that they are all solvable of a very restricted form, proving that its Lie algebra decomposes orthogonally as an abelian subalgebra and an abelian ideal, where the action of the subalgebra on the ideal is by skew-symmetric endomorphisms. Such a Lie group equipped with a flat left invariant metric $(G,\la\cdot,\cdot\ra)$ will be called a flat Lie group and  $(\g,\la\cdot,\cdot\ra_e)$ will be called a flat Lie algebra.
	
	Using Milnor's characterization, Barberis, Dotti and Fino decompose further a flat Lie algebra in the following way. 
	
	\begin{teo}\cite[Proposition 2.1]{BDF}\label{alglieplanas}
		Let $(\g,\la\cdot,\cdot\ra_e)$ be a flat Lie algebra. Then $\g$ splits as an orthogonal direct sum, \[\g=\b\oplus\z(\g)\oplus [\g,\g]\] where $\b$ is an abelian subalgebra, $[\g,\g]$ is abelian and the following conditions are satisfied:
		
		\begin{enumerate}
			\item $\ad:\b\to\mathfrak{so}([\g,\g])$ is injective,
			
			\item $\dim [\g,\g]$ is even, and 
			
			\item $\dim\b\leq\frac{\dim[\g,\g]}{2}$.
			
		\end{enumerate}
	\end{teo}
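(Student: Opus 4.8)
The plan is to start from Milnor's characterization, which is the only input the statement relies on. By Milnor's theorem we may write the flat Lie algebra as an orthogonal sum $\g = \u \oplus \v$ where $\u$ is an abelian subalgebra, $\v$ is an abelian ideal, and $\ad_X|_{\v} \in \so(\v)$ for every $X \in \u$, while $\ad_X|_{\u}=0$. The goal is to refine this into the canonical decomposition $\g = \b \oplus \z(\g) \oplus [\g,\g]$ with the three stated properties.

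First I would compute the commutator and the center explicitly in terms of the $\ad$-action. Since $\u$ is abelian and acts skew-symmetrically on the abelian ideal $\v$, a direct computation gives $[\g,\g] = \sum_{X\in\u} \im(\ad_X|_{\v}) \subseteq \v$, so $[\g,\g]$ is automatically abelian. Next I would identify the center: an element of $\g$ is central iff it is killed by every $\ad_X$ and kills every $\ad_X$; working this out shows $\z(\g) \cap \v = \bigcap_{X\in\u}\Ker(\ad_X|_\v) = [\g,\g]^{\perp}\cap\v$, where the orthogonality uses skew-symmetry (the kernel of a skew map is the orthogonal complement of its image). So within $\v$ we get the orthogonal splitting $\v = [\g,\g] \oplus (\z(\g)\cap\v)$.

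The next step is to handle the subalgebra $\u$. Here I would define $\b$ as the orthogonal complement in $\u$ of $\z(\g)\cap\u = \Ker(\ad|_\u)$, i.e. $\b = (\z(\g)\cap\u)^\perp \cap \u$. By construction $\ad: \b \to \so([\g,\g])$ is then injective, giving property (1); one must check that the image indeed lands in $\so([\g,\g])$ and not merely $\so(\v)$, which follows because each $\ad_X$ preserves the orthogonal splitting of $\v$ and vanishes on $\z(\g)\cap\v$. Assembling the pieces, $\g = \b \oplus (\z(\g)\cap\u) \oplus (\z(\g)\cap\v) \oplus [\g,\g] = \b \oplus \z(\g) \oplus [\g,\g]$ as an orthogonal direct sum. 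Property (2), that $\dim[\g,\g]$ is even, is immediate since a nonzero skew-symmetric endomorphism of a Euclidean space has even-dimensional image, and $[\g,\g]$ is the sum of such images; more carefully, the commuting family $\{\ad_X|_{[\g,\g]}\}$ can be simultaneously block-diagonalized into $2\times 2$ rotation blocks, forcing even rank.

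The part requiring the most care is property (3), the inequality $\dim\b \le \tfrac{1}{2}\dim[\g,\g]$. The idea is that $\b$ embeds via $\ad$ into the abelian Lie algebra $\so([\g,\g])$ of skew-symmetric endomorphisms that moreover commute pairwise (because $\b$ is abelian and the action is a representation). A maximal commuting family of skew-symmetric endomorphisms of a $2m$-dimensional space is a Cartan subalgebra of $\so(2m)$, which has dimension $m = \tfrac{1}{2}\dim[\g,\g]$; since $\ad(\b)$ is a commutative subalgebra of $\so([\g,\g])$ consisting of skew-symmetric maps with no common kernel in $[\g,\g]$, it lies in some maximal torus and hence $\dim\b = \dim\ad(\b) \le \tfrac{1}{2}\dim[\g,\g]$. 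The main obstacle is making this torus argument precise: one must verify that the common kernel of $\ad(\b)$ in $[\g,\g]$ is trivial (which holds by the very definition of $[\g,\g]$ as the span of the images), so that $\ad(\b)$ genuinely sits inside the Cartan subalgebra of $\so([\g,\g])$ rather than degenerating, and then invoke the standard bound $\dim\mathfrak{t} = \lfloor (\dim[\g,\g])/2\rfloor = \tfrac12\dim[\g,\g]$ using property (2).
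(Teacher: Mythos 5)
The paper does not actually prove this statement: it is quoted directly from \cite{BDF} (Proposition 2.1 there), so there is no internal proof to compare your argument against. Judged on its own, your reconstruction is correct and follows the same route as the cited source: refine Milnor's orthogonal splitting $\g=\u\oplus\v$ by identifying $[\g,\g]=\sum_{X\in\u}\im(\ad_X|_\v)$, split $\v$ orthogonally as $[\g,\g]\oplus(\z(\g)\cap\v)$ using kernel/image duality for skew-symmetric maps, and take $\b$ to be the orthogonal complement of $\z(\g)\cap\u$ inside $\u$ (the only step you leave implicit is the one-line check that the center splits along $\u\oplus\v$, i.e. $\z(\g)=(\z(\g)\cap\u)\oplus(\z(\g)\cap\v)$, which follows at once from your bracket computations). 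The two points that genuinely require care are both handled correctly: the even-dimensionality of $[\g,\g]$ must come from simultaneous block-diagonalization of the commuting skew-symmetric family (as you note, the naive ``sum of even-dimensional images'' remark alone would not suffice), and the bound $\dim\b\le\tfrac12\dim[\g,\g]$ rests on the fact that every abelian subalgebra of $\so(2m)$ consists of commuting semisimple (normal) operators and is therefore conjugate into the standard maximal torus of dimension $m$ --- the ``no common kernel'' condition you emphasize is true but not actually needed for that inequality.
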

	As a consequence, $\{\ad_X\mid X\in\b\}$ is an abelian subalgebra of $\so([\g,\g])$ and therefore, it is contained in a maximal abelian subalgebra. Since these are all conjugate, there exist an orthonormal basis $\B$ of $\z(\g)\oplus[\g,\g]$ and $\lambda_1,\ldots,\lambda_n\in \b^*$ such that for $X\in\b$,  
	\begin{equation}\label{adplana}
		[\ad_X]_\B=\matriz{0_s &&&&&\\&0&-\lambda_1(X)&&&\\&\lambda_1(X)&0&&&\\&&&\ddots&&\\&&&&0&-\lambda_n(X)\\&&&&\lambda_n(X)&0},
	\end{equation}
	where $n=\frac{\dim[\g,\g]}{2}$ and  $s=\dim\z(\g)$.
	
	Note that a flat Lie algebra $(\g,\la\cdot,\cdot\ra_e)$ is $2$-step solvable, since $[\g,\g]$ is abelian, and unimodular, since $\ad_X$ is skew-symmetric for all $X\in\b$. It also follows that the nilradical of $\g$ is $\z(\g)\oplus [\g,\g]$.
		
	We are interested in discrete subgroups $\Gamma$ of a flat simply-connected Lie group $G$ such that $\Gamma\backslash G$ is compact. This space endowed with the flat metric induced from $G$ is a compact flat manifold. 
	
	In general, if $G$ is a simply-connected solvable Lie group, a discrete and cocompact subgroup $\Gamma$ of $G$ is called a \textit{lattice} and the quotient $\Gamma\backslash G$ is called a \textit{solvmanifold}. With this definition, solvmanifolds are always compact, orientable, and parallelizable.
	
	It is well known that every simply-connected solvable Lie group $G$ is diffeomorphic to $\R^n$ for $n=\dim G$. Moreover, $\pi_1(\Gamma\backslash G)\cong\Gamma$. 
	
	The fundamental group of a solvmanifold plays an important role. Indeed, Mostow \cite{Mostow} proved that two solvmanifolds with isomorphic fundamental groups are diffeomorphic. 
	
	Since a flat solvmanifold $\Gamma\backslash G$ is, as mentioned before, a compact flat manifold, its fundamental group is isomorphic to a discrete torsion-free and cocompact subgroup of isometries of $\R^m$ with $m=\dim(\Gamma\backslash G)$. These subgroups are called ($m$-dimensional) \textit{Bieberbach groups} and are well described by the three classical theorems known as \comillas{Bieberbach's theorems}.  
	
	A purely algebraic characterization of the Bieberbach groups independent of their embedding into $\Iso(\R^m)$ was given by Zassenhaus \cite{Zassenhaus}.
	
	\begin{teo}
		An abstract group $\Gamma$ is isomorphic to an $n$-dimensional Bieberbach group if and only if $\Gamma$ contains a finite index, normal, free abelian subgroup $\Lambda$ of rank $n$, that is also maximal abelian.
	\end{teo}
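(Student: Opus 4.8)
The plan is to prove both implications, treating $\Lambda$ as the ``translation lattice'' of $\Gamma$. For the forward implication, suppose $\Gamma\subset\Iso(\R^n)=\R^n\rtimes\ort(n)$ is an $n$-dimensional Bieberbach group and set $\Lambda=\Gamma\cap\R^n$, the subgroup of pure translations. By Bieberbach's first theorem $\Lambda$ is a normal subgroup of finite index which is free abelian of rank $n$ (a full lattice spanning $\R^n$). It remains only to check that $\Lambda$ is maximal abelian, and for this I would compute the centralizer $C_\Gamma(\Lambda)$: writing $\gamma=(A,a)$ with $A\in\ort(n)$ and $a\in\R^n$, the composition law $(A,a)(B,b)=(AB,Ab+a)$ gives $(A,a)(I,t)(A,a)^{-1}=(I,At)$, so $\gamma$ commutes with the translation by $t$ exactly when $At=t$; since $\Lambda$ spans $\R^n$ this forces $A=\Id$, whence $\gamma\in\Gamma\cap\R^n=\Lambda$. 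Thus $C_\Gamma(\Lambda)=\Lambda$, and as any abelian subgroup containing $\Lambda$ must centralize it, $\Lambda$ is maximal abelian.

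For the converse I would reconstruct the geometry from the algebra. Conjugation in $\Gamma$ gives a homomorphism $\theta\colon\Gamma\to\Aut(\Lambda)\cong\GL(n,\Z)$ whose kernel is exactly $C_\Gamma(\Lambda)=\Lambda$; this is precisely where maximality is used, since any $\gamma$ centralizing $\Lambda$ generates an abelian subgroup together with $\Lambda$ and hence lies in $\Lambda$. Therefore $\theta$ descends to an injection of the finite group $F:=\Gamma/\Lambda$ into $\GL(n,\Z)$, so $F$ acts faithfully on $\Lambda$. Setting $V:=\Lambda\otimes_\Z\R\cong\R^n$ and averaging an arbitrary inner product over $F$, I obtain an $F$-invariant inner product, so that $\theta(F)\subset\ort(V)=\ort(n)$.

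The heart of the argument is to realize $\Gamma$ inside $\Iso(\R^n)$. The extension $1\to\Lambda\to\Gamma\to F\to1$ determines a class in $H^2(F,\Lambda)$; pushing it forward along the $F$-equivariant inclusion $\Lambda\hookrightarrow V$ yields an extension $1\to V\to E\to F\to1$ whose class is the image of the former under $H^2(F,\Lambda)\to H^2(F,V)$. Since $F$ is finite and $V$ is a real vector space, $H^2(F,V)=0$, so this pushed-forward extension splits and $E\cong V\rtimes_\theta F$, which sits inside $\Iso(\R^n)$ because $F$ acts orthogonally. The canonical map $\Gamma\to E$ is injective by a short diagram chase, as it restricts to the injection $\Lambda\hookrightarrow V$ and induces the identity on $F$; hence $\Gamma$ embeds as a subgroup of $\Iso(\R^n)$. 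Discreteness and cocompactness then follow because $\Lambda$ is a full lattice and $F$ is finite, so $\Gamma$ is realized as a crystallographic group; the embedded $\Gamma$ is a genuine Bieberbach group precisely when it is torsion-free, equivalently when its action on $\R^n$ is free, which I would analyze using that every finite-order isometry of $\R^n$ fixes a point.

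I expect the main obstacle to be this last step: the vanishing $H^2(F,V)=0$ (a transfer/averaging argument valid for finite $F$ with divisible, torsion-free coefficients) is exactly what upgrades the purely algebraic extension to an honest isometric action, and pinning down torsion-freeness --- the delicate point separating a general crystallographic group from a Bieberbach group --- is the most subtle part, since the hypotheses on $\Lambda$ guarantee the correct dimension and a faithful point group but must be combined carefully with freeness of the action to rule out fixed points.
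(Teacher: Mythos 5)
The paper gives no proof of this statement---it is quoted from Zassenhaus \cite{Zassenhaus}---so there is nothing to compare against except the standard argument, which is essentially what you reproduce. Your forward direction is complete and correct: Bieberbach's first theorem gives that $\Lambda=\Gamma\cap\R^n$ is a full-rank lattice of finite index, and your centralizer computation $C_\Gamma(\Lambda)=\Lambda$ does yield maximality. Your converse is also correct as far as it goes: maximality forces the kernel of the conjugation action to be $\Lambda$, so $F=\Gamma/\Lambda$ acts faithfully on $\Lambda$; averaging produces an invariant inner product; and $H^2(F,V)=0$ for $V=\Lambda\otimes_\Z\R$ lets you push the extension forward and embed $\Gamma$ as a discrete, cocompact subgroup of $\Iso(\R^n)$, i.e.\ as a crystallographic group.

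The genuine gap is exactly the step you defer, torsion-freeness, and the problem is worse than ``subtle'': it is unprovable from the stated hypotheses. Take $\Gamma=\Z^n\rtimes_{-\Id}\Z_2$. Here $\Lambda=\Z^n$ is normal, of index two, free abelian of rank $n$, and maximal abelian: any element $(v,\sigma)$ outside $\Lambda$ conjugates $(t,\Id)$ to $(-t,\Id)$, so $C_\Gamma(\Lambda)=\Lambda$. Yet $(0,\sigma)$ has order $2$, so $\Gamma$ is not isomorphic to any torsion-free group, in particular not to a Bieberbach group as this paper defines them (discrete, \emph{torsion-free}, cocompact). In other words, the hypotheses of the theorem characterize crystallographic groups---which is what Zassenhaus actually proved and what your cohomological construction delivers---and the statement as printed needs ``$\Gamma$ is torsion-free'' added on the algebraic side to match the paper's definition. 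Once that hypothesis is added, your feared final step evaporates: the embedded group is discrete, cocompact, and torsion-free by assumption, hence Bieberbach by definition, with no fixed-point analysis needed. So the correct repair of your proposal is not to work harder at the last step (no argument can close it), but to observe that it cannot hold as stated and that the missing hypothesis is harmless in the paper's context, where $\Gamma$ arises as the fundamental group of a compact flat manifold and is therefore automatically torsion-free.
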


	The subgroup $\Lambda$ is the unique normal maximal abelian subgroup of $\Gamma$ and is called the \textit{translation group of} $\Gamma$. In other words, a Bieberbach group $\Gamma$ satisfies an exact sequence \[1\to \Lambda\xrightarrow[]{\iota} \Gamma\xrightarrow[]{\pi} H\to 1,\] where $H=\Gamma/\Lambda$ is a finite group and $\operatorname{rank} \Lambda=m$. It is well known that the group $H$ can be identified with the Riemannian holonomy group of the compact flat manifold whose fundamental group is $\Gamma$ (see for instance \cite{Ch}). 
	
	\medskip
	
	We will focus in a special class of flat solvmanifolds, namely the splittable ones.
	
	A simply-connected solvable Lie group $G$ is called \textit{splittable} if it is isomorphic to $\R^k\ltimes_\phi N$ where $N$ is the nilradical of $G$ and $\phi:\R^k\to\Aut(N)$ is an homomorphism. A lattice $\Gamma$ of a splittable Lie group $\R^k\ltimes_{\phi} N$ will be called \textit{splittable} if it can be written as $\Gamma=\Gamma_1\ltimes_{\phi} \Gamma_2$ where $\Gamma_1\subset\R^k$ and $\Gamma_2\subset N$ are lattices of $\R^k$ and $N$ respectively. Consequently $\Gamma\backslash G$ will be called a \textit{splittable solvmanifold}. According to \cite{Bock}, when $k=1$ and $N\simeq \R^n$ every lattice is splittable. The Lie groups $\R\ltimes_\phi \R^n$ are called \textit{almost abelian}.
	
	The next theorem, which is a particular case of a more general theorem proved in \cite{Y}, gives a criterion to determine the splittable lattices in a splittable Lie group $G=\R^k\ltimes_{\phi} \R^m$.

	\begin{teo}\label{lattices}
	Let $G=\R^k \ltimes_{\phi} \R^{m}$ be a splittable Lie group, where $\R^m$ is the nilradical of $G$. Then $G$ has a splittable lattice if and only if there exists a basis $\{X_1,\ldots,X_k\}$ of $\R^k$  such that $\exp(\ad_{X_i})$ is similar\footnote{Throughout this article, a $n\times n$ matrix $A$ will be said to be \textit{similar} (or \textit{conjugated}) to $B$ if there exists $P\in \GL(n,\R)$ such that $P^{-1}AP=B$ and \textit{integrally similar} if $P\in \GL(n,\Z)$.} to an integer matrix for all $1\leq i\leq k$. In this case, the lattice is $\Gamma=(\bigoplus_{i=1}^k  \Z X_i) \ltimes_{\phi} P \Z^m$ where $P^{-1} \exp(\ad_{X_i}) P$ is an integer matrix.  
\end{teo}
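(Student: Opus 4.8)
The plan is to reduce the entire statement to a single, very concrete \emph{subgroup-closure condition} inside the semidirect product, and then to dispatch discreteness and cocompactness separately. First I would record the group law of $G=\R^k\ltimes_\phi\R^m$: writing elements as pairs $(t,v)$ with $t\in\R^k$ and $v\in\R^m$, one has $(t_1,v_1)(t_2,v_2)=(t_1+t_2,\,v_1+\phi(t_1)v_2)$ and $(t,v)^{-1}=(-t,-\phi(-t)v)$. Since $\R^k$ is abelian, the group exponential restricted to the $\R^k$-factor is, under the usual identification, the identity, so that $\phi(X)=\exp(\ad_X)$, where $\ad_X$ denotes the derivation of the nilradical $\R^m$ induced by $X$. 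This identification, which is exactly where the hypothesis that $\R^m$ is the nilradical is used, is what lets the \emph{analytic} datum $\phi$ be replaced by the \emph{algebraic} datum $\ad$.

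The heart of the argument is then an elementary closure computation. A product set $\Gamma_1\ltimes_\phi\Gamma_2$, with $\Gamma_1\subset\R^k$ and $\Gamma_2\subset\R^m$ subgroups, is stable under the group law exactly when $\phi(t_1)v_2\in\Gamma_2$ for all $t_1\in\Gamma_1$ and $v_2\in\Gamma_2$, and stable under inversion exactly when $\phi(-t)\Gamma_2\subseteq\Gamma_2$; together these two say $\phi(t)\Gamma_2=\Gamma_2$ for every $t\in\Gamma_1$. Because $\phi$ is a homomorphism, it suffices to test this on generators, so for $\Gamma_1=\bigoplus_i\Z X_i$ the condition reads $\exp(\ad_{X_i})\Gamma_2=\Gamma_2$ for each $i$. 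Writing the lattice $\Gamma_2\subset\R^m$ as $P\Z^m$ with $P\in\GL(m,\R)$, this is precisely $P^{-1}\exp(\ad_{X_i})P\in\GL(m,\Z)$, which is the integrality asserted in the theorem.

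With this in hand both implications are short. For the forward direction I would start from a splittable lattice $\Gamma=\Gamma_1\ltimes_\phi\Gamma_2$; any lattice of $\R^k$ is automatically $\bigoplus_i\Z X_i$ for some basis $\{X_i\}$, and any lattice of $\R^m$ is $P\Z^m$, so the closure condition above directly produces the required $P$. For the converse, given $\{X_i\}$ and $P$ with $P^{-1}\exp(\ad_{X_i})P\in\GL(m,\Z)$, I set $\Gamma_1=\bigoplus_i\Z X_i$, $\Gamma_2=P\Z^m$ and $\Gamma=\Gamma_1\ltimes_\phi\Gamma_2$; the same computation shows $\Gamma$ is a subgroup, it is discrete because its underlying subset of the manifold $\R^k\times\R^m\cong G$ is the product of two lattices, and it is cocompact because the projection $G\to\R^k$ exhibits $\Gamma\backslash G$ as a torus bundle $\Gamma_2\backslash\R^m\to\Gamma\backslash G\to\Gamma_1\backslash\R^k$ over a torus.

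Two points demand care, and the second is the genuine obstacle. First, the statement invokes similarity to an \emph{integer} matrix, whereas stability of the lattice truly requires membership in $\GL(m,\Z)$, that is determinant $\pm1$; this gap is closed by unimodularity, since a Lie group admitting a lattice is unimodular and hence $\det\exp(\ad_{X_i})=e^{\tr\ad_{X_i}}=\pm1$, so an integral conjugate is automatically invertible over $\Z$. Second, and more seriously when $k\geq 2$, the conclusion uses a \emph{single} $P$ conjugating all of $\exp(\ad_{X_1}),\dots,\exp(\ad_{X_k})$ into $\GL(m,\Z)$ at once, equivalently a single lattice preserved by the whole abelian group these matrices generate. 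Thus the condition must be read as \emph{simultaneous} integral similarity, and the main difficulty is precisely producing such a common invariant lattice; this is where the commutativity of the $\ad_{X_i}$ (forced by the abelianness of $\R^k$) has to be used, and it is the content for which the general result of \cite{Y} is invoked.
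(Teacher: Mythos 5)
The paper never proves this theorem: it is quoted as ``a particular case of a more general theorem proved in \cite{Y}'', so there is no internal proof to compare yours against. Your skeleton --- the closure computation showing that $\Gamma_1\ltimes_\phi\Gamma_2$ is a subgroup precisely when $\phi(X_i)\Gamma_2=\Gamma_2$ on a generating set, the forward direction, and discreteness plus cocompactness of the constructed subgroup --- is the natural argument and is sound. However, the two points you yourself single out as delicate are both resolved incorrectly.

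First, the unimodularity fix is circular exactly where it is needed. In the forward direction no fix is required: $\phi(X_i)\Gamma_2=\Gamma_2$ already forces $E_i=P^{-1}\exp(\ad_{X_i})P$ and $E_i^{-1}$ to be integer, i.e.\ $E_i\in\GL(m,\Z)$. In the construction direction you may not quote ``a Lie group admitting a lattice is unimodular'', because the existence of a lattice is the conclusion, not a hypothesis. The gap is real: take $G=\R\ltimes_\phi\R$ with $\phi(t)=(2^t)$; the second factor is the nilradical, $\exp(\ad_{X})=(2)$ is an integer matrix, yet $G$ is not unimodular and admits no lattice at all --- concretely, $\Z X\ltimes_\phi P\Z$ is not closed under inversion since $(2)^{-1}$ is not integral. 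So the hypothesis must be read as similarity to a matrix in $\GL(m,\Z)$, which is how the paper itself applies Theorem \ref{lattices} later (it seeks a single $P\in\GL(5,\R)$ with $P^{-1}\exp(\ad_x)P$ and $P^{-1}\exp(\ad_y)P$ both in $\GL(5,\Z)$). Alternatively, in the flat case, invertibility over $\Z$ is automatic, but the correct reason is not unimodularity: $\exp(\ad_{X_i})$ is orthogonal, hence semisimple with spectrum on the unit circle, so any integer matrix similar to it has all eigenvalues roots of unity by Kronecker's theorem and therefore has finite order, hence lies in $\GL(m,\Z)$.

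Second, your closing claim --- that commutativity of the $\ad_{X_i}$, via \cite{Y}, upgrades \emph{individual} integral similarity to \emph{simultaneous} integral similarity --- is false, so it cannot be what \cite{Y} supplies. Writing $R(\theta)$ for the $2\times 2$ rotation through $\theta$, the matrices $A=R(2\pi/5)\oplus R(4\pi/5)$ and $B=R(4\pi/5)\oplus R(2\pi/5)$ commute and each is similar to the integer companion matrix of $x^4+x^3+x^2+x+1$; but $\la A,B\ra\cong\Z_5\times\Z_5$, and every degree-$4$ rational representation of $\Z_5\times\Z_5$ has a kernel of order at least $5$, so no single $P\in\GL(4,\R)$ can conjugate both $A$ and $B$ into integer matrices (such conjugates would be finite-order elements of $\GL(4,\Z)$ generating a faithful copy of $\Z_5\times\Z_5$). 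The only reading under which the theorem's final clause makes sense --- and the one the paper uses --- is that the hypothesis already posits one $P$ for all $i$ simultaneously; with that reading, and with the $\GL(m,\Z)$ correction above, your elementary argument is already a complete proof, and nothing remains to be delegated to \cite{Y}, whose extra content concerns non-abelian nilradicals. (A minor further correction: the identification $\phi(X)=\exp(\ad_X)$ uses only that $\R^k$ is simply connected and abelian; the hypothesis that $\R^m$ is the nilradical plays no role in the proof of this criterion --- it is part of the definition of splittable.)
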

	Denoting $E_i=P^{-1} \exp(\ad_{X_i}) P$, the lattice $\Gamma=(\bigoplus_{i=1}^k \Z X_i) \ltimes_{\phi} P \Z^{m}$ is isomorphic to the group $\Sigma_{E_1,\ldots,E_k}:=\Z^k\ltimes_{E_1,\ldots,E_k} \Z^{m}$, whose multiplication is given by \[(r,t)\cdot (r',t')=\left(r+r',t+E_1^{r_1}\cdots E_k^{r_k} t'\right),\; r=(r_1,\ldots,r_k), \;r'\in \Z^k,\, t,t'\in \Z^m.\]
	Note that the multiplication is well defined because $E_i E_j=E_j E_i$ for all $i,j$.
	
	A flat Lie algebra $\g=\b\oplus\z(\g)\oplus [\g,\g]$ can be written as $\g=\R^k\ltimes_{\ad} \R^{s+2n}$, where $\b\simeq \R^k$ and the nilradical is given by $\z(\g)\oplus [\g,\g]\simeq \R^{s+2n}$. The corresponding simply-connected flat Lie group can be written as $G=\R^k\ltimes_\phi \R^{s+2n}$ where $\phi( \sum_{i=1}^k s_i X_i)=\prod_{i=1}^k \exp(s_i \ad_{X_i})$, with $\{\ad_{X_i}\}_{i=1}^k$ as in \eqref{adplana}. Therefore a flat Lie group is a splittable Lie group.
	
	To classify the splittable flat solvmanifolds, we have to classify the splittable lattices of flat Lie groups (up to isomorphism, by Mostow's theorem).
	
	The next results, proved in \cite{Tol2}, show some sort of relation between the splittable lattices of flat Lie groups $G=\R^k\ltimes \R^{s+2n}$ and the finite abelian subgroups of $\GL(s+2n,\Z)$.
	
	\begin{prop}\label{hol}
		Let $G=\R^k\ltimes_\phi\R^{s+2n}$ be a splittable flat Lie group and $\Gamma$ a splittable lattice given by $\Gamma=(\bigoplus_{i=1}^k \Z X_i)\ltimes_{\phi} P \Z^{s+2n}$, where $E_i:= P^{-1}\exp(\ad_{X_i})P$ is integer for $1\leq i\leq k$. Then $\hol(\Gamma\backslash G)\cong \la E_1,\ldots, E_k\ra$.
	\end{prop}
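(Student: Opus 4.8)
The plan is to use the identification of the Riemannian holonomy group with the point group $H=\Gamma/\Lambda$ recalled above, together with the explicit presentation $\Gamma\cong\Sigma_{E_1,\ldots,E_k}=\Z^k\ltimes_{E_1,\ldots,E_k}\Z^{s+2n}$. Writing $N=s+2n$ and $m=k+N=\dim(\Gamma\backslash G)$, the whole statement reduces to producing a surjective homomorphism $\Sigma_{E_1,\ldots,E_k}\to\langle E_1,\ldots,E_k\rangle$ whose kernel is exactly the translation group $\Lambda$, and then invoking the first isomorphism theorem.

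First I would define $\rho\colon \Sigma_{E_1,\ldots,E_k}\to\GL(N,\Z)$ by $\rho(r,t)=E_1^{r_1}\cdots E_k^{r_k}$. Since the $E_i$ commute, the multiplication law $(r,t)(r',t')=(r+r',\,t+E_1^{r_1}\cdots E_k^{r_k}t')$ gives $\rho((r,t)(r',t'))=\rho(r,t)\rho(r',t')$, so $\rho$ is a homomorphism with image precisely $\langle E_1,\ldots,E_k\rangle$. Its kernel is $\Lambda:=\{(r,t):E_1^{r_1}\cdots E_k^{r_k}=I\}$. Setting $K:=\{r\in\Z^k:E_1^{r_1}\cdots E_k^{r_k}=I\}\subseteq\Z^k$, on $\Lambda=\{(r,t):r\in K,\ t\in\Z^N\}$ the product reduces to componentwise addition, so $\Lambda\cong K\oplus\Z^N$.

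Next I would check that $\Lambda$ is indeed the translation group of the Bieberbach group $\Gamma$, which by Zassenhaus's characterization amounts to verifying that $\Lambda$ is a normal, free abelian, maximal abelian subgroup of rank $m$. Normality is automatic as $\Lambda=\ker\rho$; it is abelian because its law is additive; and it is maximal abelian because if $(r,t)$ commutes with every pure translation $(0,t')$, $t'\in\Z^N$, then $E_1^{r_1}\cdots E_k^{r_k}t'=t'$ for all $t'$, forcing $\prod_i E_i^{r_i}=I$, i.e. $(r,t)\in\Lambda$. Once $\Lambda$ is identified as the translation group, the identification $\hol(\Gamma\backslash G)\cong\Gamma/\Lambda=\Sigma/\ker\rho\cong\langle E_1,\ldots,E_k\rangle$ follows at once.

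The main point — and the only place where the hypotheses on the $E_i$ really enter — is the rank computation, equivalently the finiteness of $\langle E_1,\ldots,E_k\rangle$: one needs $K$ to have finite index in $\Z^k$ so that $\Lambda\cong K\oplus\Z^N$ is free abelian of rank $k+N=m$. This is where I would use that each $E_i$ is an integer matrix similar to the orthogonal matrix $\exp(\ad_{X_i})$ (recall $\ad_{X_i}$ is skew-symmetric by \eqref{adplana}); hence $E_i$ is semisimple with all eigenvalues on the unit circle, so by Kronecker's theorem these eigenvalues are roots of unity and $E_i$ has finite order. Consequently $\langle E_1,\ldots,E_k\rangle$, being finitely generated by commuting torsion elements, is finite, whence $\Z^k/K\hookrightarrow\langle E_1,\ldots,E_k\rangle$ is finite and $K$ has rank $k$. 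This completes the verification of the Zassenhaus conditions and yields $\hol(\Gamma\backslash G)\cong\langle E_1,\ldots,E_k\rangle$.
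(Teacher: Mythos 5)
Your proof is correct, but note first that the paper itself contains no proof of this proposition to compare against: it is stated as one of "the next results, proved in \cite{Tol2}", so the relevant comparison is with the argument of the cited reference (see also \cite[Theorem 3.7]{Tol1} for the almost abelian case). That argument is geometric: since $\phi$ takes values in orthogonal matrices, the left-invariant flat metric on $G=\R^k\ltimes_\phi\R^{s+2n}$ is, in the global coordinates, the standard Euclidean metric, so left translations embed $\Gamma$ concretely as a Bieberbach subgroup of $\Iso(\R^{k+s+2n})$; left translation by $\left(\sum_i r_iX_i,\,Pv\right)\in\Gamma$ has rotational part $\I_k\oplus\prod_i\exp(\ad_{X_i})^{r_i}$, and the classical identification of the holonomy of a compact flat manifold with the group of rotational parts of its deck group (see \cite{Ch}) gives $\hol(\Gamma\backslash G)\cong\la E_1,\ldots,E_k\ra$ at once, the translation subgroup being visible as the set of elements with trivial rotational part. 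Your route is instead purely algebraic, working only with the abstract presentation $\Sigma_{E_1,\ldots,E_k}\cong\Gamma$: you exhibit the homomorphism $\rho(r,t)=E_1^{r_1}\cdots E_k^{r_k}$ and identify $\ker\rho$ as the translation group by verifying the Zassenhaus conditions — normality, maximal abelianness (via commutation with pure translations), and freeness of rank $k+s+2n$ (via finiteness of $\la E_1,\ldots,E_k\ra$) — after which $\hol(\Gamma\backslash G)\cong\Gamma/\ker\rho\cong\la E_1,\ldots,E_k\ra$ follows from the facts recalled in the paper's preliminaries. The geometric route gets the translation subgroup and the rank count for free from the embedding; your route buys independence from any explicit isometric identification $G\cong\R^{k+s+2n}$, at the cost of the maximality and rank verifications, which you carry out correctly. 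One small simplification: Kronecker's theorem is not needed to see that each $E_i$ has finite order, since $\la E_1,\ldots,E_k\ra$ lies both in the discrete group $\GL(s+2n,\Z)$ and in the compact group $P^{-1}\ort(s+2n)P$, hence is finite; but your argument is also valid, and indeed matches the paper's own use of Kronecker elsewhere.
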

In particular, the holonomy group of a flat almost abelian solvmanifold is (finite) cyclic (see also \cite[Theorem 3.7]{Tol1}).

Two conjugate subgroups of $\GL(s+2n,\Z)$ which can be obtained as the holonomy group of a flat solvmanifold give rise to isomorphic lattices, as the next lemma shows.

\begin{lema}
	Let $E_1,\ldots,E_k,F_1,\ldots,F_k\in \GL(s+2n,\Z)$ be commuting matrices of finite order. If $\la E_1,\ldots, E_k\ra$ is conjugate to $\la F_1,\ldots,F_k\ra$ in $\GL(s+2n,\Z)$ then $\Sigma_{E_1,\ldots,E_k}\cong \Sigma_{F'_1,\ldots,F'_k}$ for some generating set $\{F'_i\}_{i=1}^k$ of $\la F_1,\ldots, F_k\ra$. Furthermore, suppose that the cardinal of a minimal generating set of $\la E_1,\ldots, E_k\ra$ is $\l <k$. Then $\Z^k \ltimes_{E_1,\ldots,E_k} \Z^{s+2n} \cong \Z^\l \ltimes_{H'_1,\ldots,H'_\l} \Z^{s+2n+k-\l}$, where $H'_i=\matriz{\I_{k-\l}&\\& H_i}$ and $\{H_i\}_{i=1}^\l$ is a generating set of $\la E_1,\ldots, E_k\ra$. 	
\end{lema}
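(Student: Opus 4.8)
The plan is to handle the two assertions separately, producing an explicit isomorphism in each case.

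For the first assertion, let $P\in\GL(s+2n,\Z)$ realize the conjugacy, so that $P^{-1}\la E_1,\ldots,E_k\ra P=\la F_1,\ldots,F_k\ra$, and set $F'_i:=P^{-1}E_iP$. Since conjugation by $P$ is a group isomorphism carrying $\la E_1,\ldots,E_k\ra$ onto $\la F_1,\ldots,F_k\ra$ and the $E_i$ generate the former, the $F'_i$ generate the latter, as required. I would then verify directly that
\[ \Psi\colon \Sigma_{E_1,\ldots,E_k}\to \Sigma_{F'_1,\ldots,F'_k},\qquad \Psi(r,t)=(r,P^{-1}t), \]
is an isomorphism. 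Using $(F'_1)^{r_1}\cdots(F'_k)^{r_k}=P^{-1}(E_1^{r_1}\cdots E_k^{r_k})P$, the homomorphism property reduces to a one-line computation, and bijectivity is clear because $P\in\GL(s+2n,\Z)$ preserves $\Z^{s+2n}$.

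For the second assertion, the starting observation is that $A:=\la E_1,\ldots,E_k\ra$ is a finitely generated torsion abelian group, hence finite, and that the action defining $\Sigma_{E_1,\ldots,E_k}$ is the homomorphism $\rho\colon\Z^k\to A$, $\rho(r)=E_1^{r_1}\cdots E_k^{r_k}$. The key device is to replace the generating set by a better-adapted one through a change of basis of $\Z^k$: for any $Q\in\GL(k,\Z)$ the assignment $(r,t)\mapsto(Qr,t)$ yields $\Z^k\ltimes_{\rho\circ Q}\Z^{s+2n}\cong\Z^k\ltimes_\rho\Z^{s+2n}$, and $\rho\circ Q$ corresponds to the new generators $\prod_j E_j^{Q_{ji}}$. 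So I would look for $Q$ making the first $k-\l$ of these new generators trivial while the remaining $\l$ still generate $A$.

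This is exactly what the Smith normal form of the inclusion $\Ker\rho\hookrightarrow\Z^k$ provides: there is a basis $w_1,\ldots,w_k$ of $\Z^k$ and integers $a_1\mid\cdots\mid a_k$ with $a_1w_1,\ldots,a_kw_k$ a basis of $\Ker\rho$, whence $A\cong\bigoplus_i\Z/a_i$. The number of $a_i>1$ equals the minimal number of generators of $A$, which by hypothesis is $\l$; since the $a_i$ are ordered by divisibility this forces $a_1=\cdots=a_{k-\l}=1$, so $w_1,\ldots,w_{k-\l}\in\Ker\rho$, while $H_j:=\rho(w_{k-\l+j})$ for $1\le j\le\l$ generate $A$. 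Taking $Q$ to be the matrix with columns $w_1,\ldots,w_k$ then makes $\rho\circ Q$ depend only on the last $\l$ coordinates. Since the first $k-\l$ coordinates act trivially, $\Z^k\ltimes_{\rho\circ Q}\Z^{s+2n}$ splits as $\Z^{k-\l}\times(\Z^\l\ltimes_{H_1,\ldots,H_\l}\Z^{s+2n})$, and a direct check identifies this with $\Z^\l\ltimes_{H'_1,\ldots,H'_\l}\Z^{s+2n+k-\l}$ where $H'_i=\matriz{\I_{k-\l}&\\&H_i}$, the block $\I_{k-\l}$ encoding the trivial action on the extra $\Z^{k-\l}$ summand. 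The main obstacle is this middle step: extracting from the minimality hypothesis the precise conclusion that exactly $k-\l$ invariant factors are trivial, which is what allows one to peel off the free central factor; everything else is routine bookkeeping with semidirect products.
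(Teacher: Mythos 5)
Your proof is correct. There is nothing in the paper itself to compare it against: the lemma is imported from \cite{Tol2} and no proof is reproduced here, so your argument has to be judged on its own terms, and on those terms it is complete and follows the natural route. The first half is exactly the expected conjugation computation, with $F'_i=P^{-1}E_iP$ and the isomorphism $(r,t)\mapsto (r,P^{-1}t)$, which works because $P$ and $P^{-1}$ are integer matrices. In the second half, the one genuinely nontrivial point is the one you isolate: writing the action as the homomorphism $\rho\colon\Z^k\to\la E_1,\ldots,E_k\ra$ (a homomorphism precisely because the $E_i$ commute, and with finite image, so $\Ker\rho$ has full rank $k$), the hypothesis that the minimal number of generators is $\ell$ forces exactly the first $k-\ell$ invariant factors of $\Z^k/\Ker\rho$ to equal $1$, since the number of nontrivial invariant factors of a finite abelian group is its minimal number of generators. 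This is what permits a $\GL(k,\Z)$ change of basis of the acting $\Z^k$ that kills the action of the first $k-\ell$ coordinates; after that, trading the resulting central $\Z^{k-\ell}$ factor into the translation lattice to produce $H'_i=\matriz{\I_{k-\ell}&\\&H_i}$ is, as you say, routine bookkeeping, and your verification of it is sound.
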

In conclusion, to determine all the isomorphism classes of splittable lattices, we must first look at the finite abelian subgroups of $\GL(n,\Z)$ up to conjugacy and see which of them can be obtained as the holonomy group of a flat solvmanifold. Then, we have to distinguish the lattices. A classification of the finite subgroups of $\GL(n,\Z)$  for $n\leq 6$ was obtained (for $n=5,6$ with aid of CARAT, see \cite{Pl}). A list of these subgroups can be found in  https://www.math.kyoto-u.ac.jp/$\sim$yamasaki/Algorithm/RatProbAlgTori/crystdat.html.

\medskip

\subsection{$G_2$-structures}
\begin{defi}
	Let $M$ be a 7-dimensional differentiable manifold. A $G_2$-\textit{structure} on $M$ is a 3-form $\varphi\in \Omega^3(M)$ such that, at every $p\in M$, there exists a basis $\{e_1,\ldots,e_7\}$ of $T_p M$ with respect to which \begin{equation}\label{phi1}
		\varphi_p=e^{123}+e^{145}+e^{167}+e^{246}-e^{257}-e^{347}-e^{356},
	\end{equation} 
	where $\{e^i\}_{i=1}^7$ is the dual basis of $\{e_i\}_{i=1}^7$ and $e^{ijk}$ denotes $e^i \wedge e^j \wedge e^k$. Such a 3-form $\varphi$ is called \textit{positive}.
\end{defi}
\begin{obs}
	The fact that $\varphi\in \Omega^3(M)$ is positive is equivalent to $\varphi$ being in the orbit $\GL(7,\R)\cdot \varphi_0$, where $\varphi_0\in\Omega^3(M)$ is defined pointwise by $(\varphi_0)_p=e^{127}+e^{347}+e^{567}+e^{135}-e^{146}-e^{236}-e^{245}$ and the action $\cdot : \GL(7,\R)\times \Omega^3(M)\to \Omega^3(M)$ is defined by \[ h \cdot \varphi(X,Y,Z)=\varphi(h^{-1} X,h^{-1} Y, h^{-1} Z)\quad \forall X,Y,Z\in\X(M).\]
	It is well known that the isotropy group $\{A\in \GL(7,\R)\mid A\cdot \varphi_0=\varphi_0\}$ is isomorphic to the exceptional 14-dimensional Lie group $G_2$.
	
	We point out that the 3-form $\varphi_0$ is widely used as the definition of a $G_2$-structure, but for our purposes $\varphi$ as in \eqref{phi1} will be more useful.
	
\end{obs}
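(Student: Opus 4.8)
The plan is to reduce the statement to a single pointwise fact of linear algebra on $\Lambda^3(\R^7)^*$ and then to compare the two model $3$-forms directly. Fix $p\in M$ and identify $T_pM\cong\R^7$ by choosing a basis; both conditions in the statement are fiberwise, so it suffices to work with $\varphi_p\in\Lambda^3(\R^7)^*$. Saying that $\varphi$ is positive means exactly that there is a basis in which $\varphi_p$ equals the model form of \eqref{phi1}, which is the same as requiring $\varphi_p$ to lie in the $\GL(7,\R)$-orbit of $\varphi_1:=e^{123}+e^{145}+e^{167}+e^{246}-e^{257}-e^{347}-e^{356}$, since a change of basis is precisely the action of $\GL(7,\R)$. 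Likewise the orbit condition is $\varphi_p\in\GL(7,\R)\cdot\varphi_0$. As the $\GL(7,\R)$-orbits partition $\Lambda^3(\R^7)^*$, the equivalence of the two conditions is equivalent to the single assertion that $\varphi_0$ and $\varphi_1$ lie in one and the same orbit.

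The heart of the argument is thus to produce an explicit $g\in\GL(7,\R)$ with $g\cdot\varphi_1=\varphi_0$. I would take $g$ to be the permutation matrix associated with the cyclic permutation $\sigma=(1\,7\,6\,5\,4\,3\,2)$ of the indices, that is $\sigma(1)=7$ and $\sigma(i)=i-1$ for $2\le i\le 7$, acting on covectors by $e^i\mapsto e^{\sigma(i)}$. The verification is a finite check: one confirms that $\sigma$ carries each of the seven index-triples of $\varphi_1$ onto a triple of $\varphi_0$, and that the signs match once each image triple is reordered increasingly. Because each such reordering is a cyclic rotation of three entries (an even permutation), no sign is introduced, and $\varphi_1$ is transported exactly onto $\varphi_0$. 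Both reference forms therefore lie in the same open orbit, which gives the claimed equivalence; I note in passing that $\sigma$ is a $7$-cycle, hence even, so $g\in\SO(7)$ and the identification even respects orientation and the induced metric.

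For the last sentence of the statement — that the isotropy group $\{A\in\GL(7,\R)\mid A\cdot\varphi_0=\varphi_0\}$ is isomorphic to $G_2$ — I would first observe that the stabilizers of $\varphi_0$ and of $\varphi_1$ are conjugate through the matrix $g$ just constructed, so it suffices to identify one of them. This is the classical description of $G_2$ as the stabilizer of the associative calibration: the $3$-form determines a positive-definite inner product and a vector cross product on $\R^7$, equivalently the multiplication of the imaginary octonions, and its stabilizer is exactly the automorphism group $\Aut(\mathbb{O})$, the compact $14$-dimensional simple Lie group $G_2$. A dimension count is consistent with this: since $\varphi_0$ is a stable form its orbit is open in the $35$-dimensional space $\Lambda^3(\R^7)^*$, forcing the stabilizer to have dimension $49-35=14$.

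The main obstacle is not the equivalence itself — once the two models are seen to share an orbit, both implications are immediate — but rather the explicit matching of $\varphi_0$ and $\varphi_1$, where the signs must be tracked carefully through the reordering of triples, and the identification of the stabilizer with $G_2$. The latter is the genuinely substantial input; I would treat it as the classical theorem (as presented in the accounts of Bryant, Joyce and Salamon) rather than reprove it, invoking it as the known fact asserted in the remark.
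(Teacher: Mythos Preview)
The paper does not actually prove this remark: it is stated as background, with the isotropy claim explicitly flagged as ``well known,'' and no argument is given for either the orbit equivalence or the identification with $G_2$. So there is no paper proof to compare against; the relevant question is simply whether your argument is sound.

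It is. Your reduction to a pointwise statement is correct, and the explicit $7$-cycle $\sigma=(1\,7\,6\,5\,4\,3\,2)$ does carry $\varphi_1$ to $\varphi_0$ term by term: the four triples $246,257,347,356$ map to $135,146,236,245$ already in increasing order, and the remaining three ($123\mapsto 712$, $145\mapsto 734$, $167\mapsto 756$) become $127,347,567$ after cyclic rotations, which are even and introduce no sign. Your observation that a $7$-cycle is even, so $g\in\SO(7)$, is a nice extra. For the stabilizer, treating the identification with $G_2$ as the classical result (Bryant, Joyce) is exactly what the paper itself does by calling it well known; the dimension count $49-35=14$ is a helpful consistency check but of course not a proof on its own, and you are right not to claim otherwise.
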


The existence of a $G_2$-structure is entirely a topological question. While not all smooth 7-manifolds admit $G_2$-structures, there are many that do and they are completely characterized by the following proposition proved in \cite{Law}.

\begin{prop}\label{existence}
	A smooth 7-manifold $M$ admits a $G_2$-structure if and only if $M$ is both orientable and spinnable\footnote{A \textit{spin manifold} is an oriented Riemannian manifold with a spin structure on its tangent bundle.}.
\end{prop}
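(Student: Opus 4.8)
The plan is to establish the equivalence in Proposition~\ref{existence} by analyzing the topological obstructions to reducing the structure group of the tangent bundle $TM$ to $G_2$. Since a $G_2$-structure is precisely a reduction of the frame bundle from $\GL(7,\R)$ to $G_2$, and since $G_2$ is a connected, simply-connected, compact subgroup of $\SO(7)$, the existence of such a structure is governed entirely by the characteristic classes of $TM$. The key observation is that $G_2$ sits inside $\SO(7)$ and is in fact the stabilizer of a spinor in the spin representation, so the homogeneous space $\SO(7)/G_2$ is diffeomorphic to $\R P^7$, which fibers over the unit sphere bundle of spinors.

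First I would reduce the problem in two steps. A reduction to $\SO(7)$ (orientability plus a metric) is always possible once $M$ is orientable, so the first requirement is clear. The substantive step is the further reduction from $\SO(7)$ to $G_2$. Here I would invoke the standard fact that $G_2$ is the isotropy subgroup of a nonzero spinor under the action of $\Spin(7)$, equivalently that $\Spin(7)/G_2\cong S^7$. Consequently, a reduction of the structure group to $G_2$ is equivalent to the existence of a globally defined, nowhere-vanishing spinor field on $M$, which in turn requires $M$ to be spin and then amounts to finding a nonzero section of the (rank-$8$) real spinor bundle.

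The core of the argument is then obstruction theory for sections of the spinor bundle. Assuming $M$ is spin of dimension $7$, the spinor bundle $S \to M$ is a real vector bundle of rank $8$, so a nowhere-vanishing section exists whenever the obstruction classes in $H^{i+1}(M;\pi_i(S^7))$ vanish for all $i$. Since the fiber sphere $S^7$ is $6$-connected, the only potential obstruction lives in $H^8(M;\pi_7(S^7))=H^8(M;\Z)$, which vanishes automatically because $\dim M = 7$. Hence the section exists, and the reduction to $G_2$ follows. For the converse, a $G_2$-structure forces a reduction to $\SO(7)\subset\GL(7,\R)$, giving orientability, and since $G_2\subset\Spin(7)$ the structure lifts the frame bundle to a $\Spin(7)$-bundle, producing a spin structure; thus $M$ is orientable and spin.

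I expect the main obstacle to be the identification of $G_2$ as the spinor stabilizer together with the precise homotopy-theoretic bookkeeping that reduces everything to the single top-degree obstruction class. The geometric input---that $\Spin(7)/G_2\cong S^7$ and that $G_2$ acts transitively on the unit spinors---is exactly what collapses the obstruction sequence so that dimension alone ($\dim M = 7 < 8$) kills the final term. Rather than reprove this, I would cite the result as established in \cite{Law}, where the correspondence between $G_2$-structures and spin structures on orientable $7$-manifolds is worked out in detail, and present the argument above as the conceptual outline justifying the statement.
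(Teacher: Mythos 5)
Your proposal is correct and matches the paper's treatment: the paper does not prove this proposition itself but cites \cite{Law}, and your outline (the stabilizer description $\operatorname{Spin}(7)/G_2\cong S^7$, the equivalence of a $G_2$-reduction with a nowhere-vanishing spinor field, and the vanishing of the sole obstruction in $H^8(M;\pi_7(S^7))$ because $\dim M=7$) is precisely the standard argument established in that reference. Since you also ultimately defer to \cite{Law}, the two approaches coincide.
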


A $G_2$-structure $\varphi$ on a manifold $M$ gives rise to a Riemannian metric $g_\varphi$ with volume form $\operatorname{vol}_\varphi$ via the identity
\begin{equation}\label{gphi}
	g_\varphi(X,Y)\operatorname{vol}_\varphi=\frac{1}{6}\iota_X \varphi \wedge \iota_Y \varphi \wedge \varphi,\quad X,Y\in \X(M).
\end{equation}

The existence of a $G_2$-structure $\varphi$ on $M$ determines a decomposition of the space of forms on $M$ into irreducible $G_2$-representations. The space $\Omega^k:=\Omega^k(M)$ is irreducible if $k=0,1,6,7$. The spaces of 2-forms and 3-forms decompose as \[ \Omega^2=\Omega^2_7\oplus \Omega^2_{14}, \quad \Omega^3=\Omega_1^3\oplus \Omega_7^3\oplus \Omega_{27}^3,\] where each $\Omega_{\l}^k$ has (pointwise) dimension $\l$ and this decomposition is orthogonal with respect to the metric $g_\varphi$. The spaces $\Omega_7^2$ and $\Omega_7^3$ are both isomorphic to the cotangent bundle $\Omega_7^1=T^* M$. In \cite{Kar}, Karigiannis gives explicit isomorphisms between the space $\Omega_{14}^2$ and the Lie algebra $\g_2$ and between $\Omega_{27}^3$ and the space of traceless symmetric 2-tensors $\operatorname{Sym}_0^2 (T^* M)$ on $M$. The first identification comes from the canonical isomorphism between $\Omega^2$ and $\so(7)$, the second one is given by the maps 
\begin{eqnarray*}
 \iota:\operatorname{Sym}^2_0(T^* M)\to \Omega_{27}^3,& \quad \jmath:\Omega_{27}^3 \to \operatorname{Sym}_0^2 (T^* M) \\
 (\beta_{ij})\mapsto \sum_\l \beta_{ij} g_\varphi^{j\l} e^i \wedge \iota_{e_\l} \varphi,& \qquad\qquad\qquad\qquad  \tau\mapsto \jmath(\tau)(v,w)=\star_\varphi (\iota_{v} \varphi \wedge \iota_{w} \varphi \wedge \tau)
 \end{eqnarray*}   

 The decompositions $\Omega^4=\Omega_1^4 \oplus \Omega_7^4\oplus \Omega_{27}^4$ and $\Omega^5=\Omega_7^5\oplus \Omega_{14}^5$ are obtained by taking the Hodge star of the decompositions of $\Omega^3$ and $\Omega^2$, respectively.
 
 Applying this decomposition to $\d\varphi$ and $\d\star_\varphi\varphi$ gives the following definition.
 \begin{defi}
 	Let $\varphi$ be a $G_2$-structure on a 7-manifold $M$. Then there are unique forms $\tau_0 \in\Omega^0, \tau_1\in\Omega_7^1, \tau_2\in\Omega_{14}^2$ and $\tau_3\in \Omega_{27}^3$, called the \textit{torsion forms} of $\varphi$, such that 
 	\[
 		\d\varphi=\tau_0 \star_\varphi \varphi+3\tau_1 \wedge \varphi+\star_\varphi \tau_3,\;\; \text{and}\; \; \d\star_\varphi\varphi=4\tau_1 \wedge \star_\varphi\varphi +\star_\varphi \tau_2.
 	\]
 \end{defi}

The torsion forms can be explicitly computed from $\varphi$ and $\star_\varphi\varphi$ by means of the following identities:
\begin{eqnarray}\label{tau3}  \tau_0=\frac{1}{7} \star_\varphi (\d\varphi \wedge \varphi) , & \quad 
	\tau_1=-\frac{1}{12} \star_\varphi ( \star_\varphi  \d\varphi \wedge \varphi),  \\ \nonumber
	\tau_2=- \star_\varphi \d\star_\varphi \varphi+ 4 \star_\varphi (\tau_1 \wedge \d\star_\varphi \varphi), &\quad
	\tau_3= \star_\varphi \d\varphi-\tau_0 \varphi-3 \star_\varphi (\tau_1\wedge \varphi).
\end{eqnarray}

Moreover, the torsion forms are completely encoded in the \textit{full torsion tensor} $T_\varphi$ which is the $(0,2)$-tensor defined by
\begin{equation}\label{torsion} 
	T_\varphi=\frac{\tau_0}{4} g_\varphi-\star_\varphi(\tau_1\wedge \star_\varphi\varphi)-\frac{1}{2}\tau_2-\frac{1}{4} \jmath(\tau_3).
	\end{equation} Contracting with the metric, $T_\varphi$ can be seen as $T_\varphi\in \End(TM)$ and the expression above is expressed in terms of the irreducible $G_2$-decomposition $\End(TM)=W_0\oplus W_1\oplus W_2\oplus W_3$, where $W_0\simeq \Omega^0, W_1\simeq \Omega_7^3, W_2\simeq \Omega_{14}^2$ and $W_3\simeq \Omega^3_{27}$, see e.g. \cite{FG82}. The endomorphism $T_\varphi \in \End(TM)$ satisfies $\nabla_X\varphi= \iota_{T_\varphi(X)} \star_\varphi \varphi$.

Since the torsion $T_\varphi$ decomposes into four independent components, each component can be zero or nonzero. This gives $16$ distinct classes of $G_2$-structures, called \textit{Fernández-Gray} classes. Some relevant classes with their names are given in the following table:

\medskip

\begin{tabular}{|c|c|c|}
	\hline
	
	Name & Conditions & Torsion forms \\
	\hline
	 \textit{Closed}     &     $\d\varphi=0$  & $\tau_0=\tau_1=\tau_3=0$  \\
	\textit{Coclosed}      &     $\d\star_\varphi\varphi=0$  & $\tau_1=\tau_2=0$  \\
	\textit{Coclosed of pure type} & $\d \star_\varphi \varphi=0, \d\varphi \wedge \varphi=0$ & $\tau_0=\tau_1=\tau_2=0$ \\
	\textit{Locally conformal parallel} & $\d\varphi=3\tau_1\wedge \varphi$, $\d\star_\varphi\varphi=4\tau_1\wedge \star_\varphi\varphi$ & $\tau_0=\tau_2=\tau_3=0$\\
	\textit{Nearly parallel}      &   $\d\varphi=\lambda \star_\varphi \varphi$ ($\lambda\neq 0$) & $\tau_1=\tau_2=\tau_3=0$  \\
	\textit{Torsion-free}
	      &   $\d\varphi=0$ and $\d\star_\varphi\varphi=0$ & $\tau_0=\tau_1=\tau_2=\tau_3=0$ \\ [0ex] \hline
\end{tabular}

\bigskip

We can define a $G_2$-structure on any real 7-dimensional Lie algebra $\g$ with basis $\{e_i\}_{i=1}^7$ as a 3-form $\varphi_0\in \alt^3 \g^*$ in the form of \eqref{phi1}. This structure on the Lie algebra gives rise to a left invariant $G_2$-structure on the corresponding Lie group. Therefore, any 7-dimensional Lie group has a left invariant $G_2$-structure. Note that if this left invariant $G_2$-structure is torsion-free, then the left invariant metric $g_\varphi$ is flat since it is Ricci-flat \cite{AK}.

Given a left-invariant $G_2$-structure $\varphi$ on a solvable Lie group $G$ which admits a lattice $\Gamma$, we can naturally define a $G_2$-structure $\tilde{\varphi}$ in the solvmanifold $\Gamma\backslash G$ as follows: \begin{equation}\label{phisolv} \tilde{\varphi}_{\pi(p)}(u,v,w)=\varphi_p((d\pi)_p^{-1} u, (d\pi)_p^{-1} v, (d\pi)_p^{-1} v),\quad p\in G, \, u,v,w\in T_{\pi(p)} (\Gamma\backslash G).
\end{equation} The $G_2$-structure $\tilde{\varphi}$ will be called an \textit{invariant} $G_2$-structure.
	
Given a solvmanifold $\Gamma\backslash G$ with an invariant $G_2$-structure $\tilde{\varphi}$ defined as in \eqref{phisolv}, it is easily seen that the conditions in the table above are satisfied by $\tilde{\varphi}$ if and only if they are satisfied by the 3-form $\varphi$ defined at the Lie algebra level. 

As a corollary of Proposition \ref{existence} and the existence of invariant $G_2$-structures on a solvmanifold we have 
\begin{coro}
	Any 7-dimensional solvmanifold admits a spin structure.
\end{coro}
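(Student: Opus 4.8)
The plan is to produce an explicit $G_2$-structure on an arbitrary $7$-dimensional solvmanifold and then read off spinnability from Proposition~\ref{existence}, which identifies the manifolds admitting a $G_2$-structure as precisely the orientable spin $7$-manifolds. Thus the whole argument is a short chain linking the constructions set up immediately above to that characterization.

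First I would recall that a $7$-dimensional solvmanifold is, by definition, a quotient $\Gamma\backslash G$ where $G$ is a simply-connected solvable Lie group with $\dim G=7$ and $\Gamma$ is a lattice; as noted after the definition, such a quotient is automatically compact and orientable. Next, since $\g=\Lie(G)$ is just a $7$-dimensional real vector space, I can fix any basis $\{e_i\}_{i=1}^7$ and define the $3$-form $\varphi_0\in\alt^3\g^*$ by the pointwise model \eqref{phi1}. Left translation turns $\varphi_0$ into a left-invariant $G_2$-structure $\varphi$ on $G$, and by the descent formula \eqref{phisolv} this passes to an invariant $G_2$-structure $\tilde\varphi$ on $\Gamma\backslash G$. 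Hence every $7$-dimensional solvmanifold carries a $G_2$-structure.

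Finally, with a $G_2$-structure in hand, I would invoke Proposition~\ref{existence}: the existence of a $G_2$-structure forces $\Gamma\backslash G$ to be both orientable and spinnable, so in particular it admits a spin structure, which is the assertion. The only substantive point to verify is that the algebraic $3$-form on $\g$ genuinely yields a bona fide $G_2$-structure on the quotient, but this is exactly what the construction \eqref{phisolv} records, together with the remark that the Fernández--Gray conditions (and, more basically, positivity) are inherited by $\tilde\varphi$ from $\varphi$. I therefore expect no real obstacle: no curvature estimate, cohomological computation, or obstruction class is needed, and the corollary follows immediately from the combination of Proposition~\ref{existence}, the orientability of solvmanifolds, and the left-invariant $G_2$-structure available on every $7$-dimensional Lie group.
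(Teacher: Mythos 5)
Your proposal is correct and follows exactly the paper's argument: the left-invariant $G_2$-structure defined at the Lie algebra level descends via \eqref{phisolv} to an invariant $G_2$-structure on $\Gamma\backslash G$, and Proposition~\ref{existence} then forces spinnability. This is precisely how the paper derives the corollary.
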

In particular, any flat solvmanifold admits a spin structure, and thus we obtain many examples of spinnable compact flat manifolds, which are interesting according to \cite{sz}.

\section{Classification of 7-dimensional splittable flat solvmanifolds}

The goal of this section is to classify 7-dimensional splittable flat solvmanifolds. We will follow the method given in \cite{Tol2}, which we described in the last part of the preliminaries of flat solvmanifolds.

Let $\g$ be a non-abelian 7-dimensional flat Lie algebra. According to Theorem \ref{alglieplanas} there are two possibilities for $\dim \b$, namely $\dim \b=1$ or $\dim \b=2$. If $\dim \b=1$ then $\g$ is almost abelian and if $\dim \b=2$ then $\g$ is not almost abelian.

\subsection{Almost abelian case}

A 7-dimensional almost abelian flat Lie algebra can be written as $\g=\R x\ltimes_{\ad_x}\R^6$ where $\ad_x$ can be written in some basis $\mathcal{B}$ of $\z(\g)\oplus [\g,\g]$ as the block matrix\footnote{Throughout the article we will denote  the block diagonal matrix $\matriz{A&0\\0&B}$ by $A\oplus B$.} \[[\ad_x]=\matriz{0&-a\\a&0}\oplus \matriz{0&-b\\b&0}\oplus \matriz{0&-c\\c&0},\quad a^2+b^2+c^2\neq0,\]

The corresponding Lie group is 
$G=\R\ltimes_{\phi} \R^6$ with \begin{equation}\label{phi2}
	\phi(t)=\matriz{\cos(at)&-\sin(at)\\ \sin(at)& \cos (at)} \oplus \matriz{\cos(bt)&-\sin(bt)\\ \sin(bt)&\cos(bt)}\oplus\matriz{\cos(ct)&-\sin(ct)\\ \sin(ct)&\cos(ct)}
\end{equation}
Next we find the values of $at_0, bt_0, ct_0$ such that $\phi(t_0)$ is similar to an integer matrix so that, according to Theorem \ref{lattices}, we obtain lattices. Note that if we change $at_0$ by $2\pi k \pm at_0$ we will get a similar matrix to $\phi(t_0)$ so the corresponding lattices will be isomorphic. Taking this into account, we have 
\begin{teo}\label{valuest0}
	Let $G=\R\ltimes_{\phi} \R^6$ with $\phi(t)$ as in \eqref{phi2}. Then $\phi(t_0)$ is similar to an integer matrix if and only if one of the following cases occurs: 

\smallskip

 Case 1: $at_0, bt_0, ct_0 \in \{2\pi, \pi, \frac{2\pi}{3}, \frac{\pi}{2}, \frac{\pi}{3}\}$.
		
\smallskip

 Case 2: $at_0\in \{2\pi, \pi, \frac{2\pi}{3}, \frac{\pi}{2}, \frac{\pi}{3}\},\; (bt_0,ct_0) \in \{(\frac{2\pi}{5},\frac{4\pi}{5}),(\frac{\pi}{4},\frac{3\pi}{4}),(\frac{\pi}{5},\frac{3\pi}{5}),(\frac{\pi}{6},\frac{5\pi}{6})\}$.

\smallskip
 
 Case 3: $(at_0,bt_0,ct_0)\in \{(\frac{2\pi}{7},\frac{4\pi}{7},\frac{6\pi}{7}),(\frac{2\pi}{9},\frac{4\pi}{9},\frac{8\pi}{9}),(\frac{2\pi}{14},\frac{6\pi}{14},\frac{10\pi}{14}),(\frac{2\pi}{18},\frac{10\pi}{18},\frac{14\pi}{18})\}$.
	
\end{teo}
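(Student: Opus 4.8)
The plan is to translate the condition ``$\phi(t_0)$ is similar to an integer matrix'' into a statement about roots of unity and then to carry out a finite combinatorial enumeration. First I would observe that $\phi(t_0)$ is orthogonal, hence semisimple, with eigenvalues $e^{\pm i\,at_0}, e^{\pm i\,bt_0}, e^{\pm i\,ct_0}$ on the unit circle. If $\phi(t_0)$ is conjugate in $\GL(6,\R)$ to some $E\in\GL(6,\Z)$, then $E$ has the same integer characteristic polynomial and the same eigenvalues; these are algebraic integers all of whose Galois conjugates again lie on the unit circle, so by Kronecker's theorem they are roots of unity and $E$ has finite order. Conversely, if every eigenvalue is a root of unity and the characteristic polynomial lies in $\Z[x]$, it is a product of cyclotomic polynomials, and $\phi(t_0)$, being semisimple, is conjugate over $\Q$ (hence over $\R$) to the direct sum of the integer companion matrices of its invariant factors. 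I would record this as the working criterion (as used in \cite{Tol1,Tol2}): \emph{$\phi(t_0)$ is similar to an integer matrix if and only if its characteristic polynomial is a product of cyclotomic polynomials.}

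Next I would write the characteristic polynomial as the product of the three blocks,
\[
 p(x)=\bigl(x^2-2\cos(at_0)\,x+1\bigr)\bigl(x^2-2\cos(bt_0)\,x+1\bigr)\bigl(x^2-2\cos(ct_0)\,x+1\bigr),
\]
with the convention that the angle $0$ (i.e.\ $2\pi$) contributes $(x-1)^2$ and the angle $\pi$ contributes $(x+1)^2$. Each block supplies one complex-conjugate pair of roots, so the six roots of $p$ split into three conjugate pairs, one per block. Requiring $p=\prod_j \Phi_{n_j}$ of total degree $6$ forces $\sum_j \varphi(n_j)=6$, and since each $\Phi_n$ is closed under complex conjugation, every factor absorbs a whole number of conjugate pairs. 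The key arithmetic input is the list of $n$ with small totient: $\varphi(n)=1$ for $n\in\{1,2\}$, $\varphi(n)=2$ for $n\in\{3,4,6\}$, $\varphi(n)=4$ for $n\in\{5,8,10,12\}$, and $\varphi(n)=6$ for $n\in\{7,9,14,18\}$ (every larger $n$ gives $\varphi(n)\ge 8$).

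I would then enumerate how the three conjugate pairs can distribute among cyclotomic factors. A single block can realise a degree-one factor only as $\Phi_1^2$ or $\Phi_2^2$ (the degenerate angles $0,\pi$), never an odd power of $\Phi_1$ or $\Phi_2$, since a block always contributes both of its roots. Hence the only possibilities are: (i) three independent factors, giving each angle independently among those with $2\cos\theta\in\Z$, namely $\{2\pi,\pi,\tfrac{2\pi}{3},\tfrac{\pi}{2},\tfrac{\pi}{3}\}$, which is Case 1; (ii) one such factor together with a $\varphi(n)=4$ factor, whose two conjugate pairs are read from the primitive roots of $\Phi_5,\Phi_8,\Phi_{10},\Phi_{12}$ as $(\tfrac{2\pi}{5},\tfrac{4\pi}{5}),(\tfrac{\pi}{4},\tfrac{3\pi}{4}),(\tfrac{\pi}{5},\tfrac{3\pi}{5}),(\tfrac{\pi}{6},\tfrac{5\pi}{6})$, which is Case 2; and (iii) a single $\varphi(n)=6$ factor, whose three conjugate pairs come from the primitive roots of $\Phi_7,\Phi_9,\Phi_{14},\Phi_{18}$, giving the triples of Case 3. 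Throughout I would use the symmetry noted before the statement (replacing an angle by $2\pi k\pm$ it) to pick the listed representatives, and the symmetry of the labels $a,b,c$ to present one ordering per configuration.

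The step I expect to be most delicate is completeness rather than any single computation: checking that the conjugate-pair structure of the orthogonal blocks genuinely forbids every other totient partition of $6$ (for instance configurations needing an isolated $\Phi_1$ or $\Phi_2$, or a nonexistent factor with $\varphi(n)=3$), and then verifying that the primitive-root angles of each relevant $\Phi_n$ reproduce exactly the tuples listed in Cases 2 and 3. Both are finite checks, but they require care to avoid omitting or double-counting a configuration.
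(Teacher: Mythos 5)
Your proposal is correct, and its core is the same as the paper's: Kronecker's theorem turns integral similarity into the statement that the characteristic polynomial is a product of cyclotomic polynomials, and then one enumerates the ways the totients can sum to $6$ subject to the conjugate-pair structure of the rotation blocks, arriving at exactly the partitions $\{2,2,2\}$, $\{2,4\}$, $\{6\}$ and the listed angles. Where you genuinely diverge is in the converse direction and in the treatment of the degenerate angles. The paper proves sufficiency case by case: for Cases 1 and 2 it invokes an explicit block-matrix conjugation from \cite[Lemma 5.5]{Tol1}, and for Case 3 it uses that the eigenvalues are pairwise distinct, so $\phi(t_0)$ is nonderogatory and hence similar to the (integer) companion matrix of its characteristic polynomial; it also disposes of the angles $\pi,2\pi$ in the necessity direction by reducing to the four-dimensional lemma of \cite{Tol1}. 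You instead give a single uniform argument: $\phi(t_0)$ is orthogonal, hence semisimple, so once its characteristic polynomial is an integer product of cyclotomics it is similar over $\R$ to the direct sum of the integer companion matrices of its invariant factors (your phrase ``conjugate over $\Q$'' is imprecise, since $\phi(t_0)$ is not a rational matrix, but the semisimplicity argument you sketch gives similarity over $\R$, which is what is needed); likewise your parity observation that $\Phi_1,\Phi_2$ can only occur to even powers replaces the reduction to the lower-dimensional case. Your route is more self-contained and handles repeated eigenvalues without case distinctions — note that the paper's companion-matrix trick genuinely fails in Cases 1 and 2 when eigenvalues repeat, which is why it needs the external lemma. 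What the paper's explicit approach buys in exchange is the conjugating matrix $P$ itself, which Theorem \ref{lattices} requires in order to write down the lattice $\Gamma=(\bigoplus_i \Z X_i)\ltimes_\phi P\Z^m$ concretely; your existence argument would have to be supplemented by such explicit conjugations when the classification tables are assembled later in the paper.
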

\begin{proof}
	$\Leftarrow)$ For Cases (1) and (2) the matrices can be conjugated to an integer matrix via a block-matrix (see \cite[Lemma 5.5]{Tol1}). In Case (3), the eigenvalues of $\phi(t_0)$ are all different so $\phi(t_0)$ is similar to the companion matrix of its characteristic polynomial, which is integer.

	$\Rightarrow)$ If any of $at_0, bt_0$ or $ct_0$ are $\pi$ or $2\pi$ the values of the other parameters are the ones obtained for the case $\R\ltimes \R^4$ in \cite[Lemma 5.5]{Tol1}, so we assume next $at_0,bt_0,ct_0\notin \{\pi,2\pi\}$. Now, since the eigenvalues of $\phi(t)$ belong to the unit circle and $\phi(t_0)$ is similar to an integer matrix, it follows from a famous theorem of Kronecker that $\phi(t_0)$ has finite order. Therefore, the characteristic polynomial $P_{\phi(t_0)}$ of $\phi(t_0)$ has degree 6, no real roots and divides $x^d-1$ for some $d\in \N$. Equivalently, $P_{\phi(t_0)}$ is a product of cyclotomic polynomials of degree $\geq 2$. Thus, we are looking for the sets with repetition $S\subset \{3,4,\ldots\}$ which satisfy $\sum_{j\in S} \varphi(j)=6$. The possibilities are $S=\{6\}, S=\{2,4\}$ and $S=\{2,2,2\}$. From there we can deduce the possibilities for $\phi(t_0)$ and looking at the eigenvalues we can deduce the values for $at_0, bt_0, ct_0$ as shown in the statement.
\end{proof}

From the classification of finite subgroups of $\GL(6,\Z)$ we were able to extract the finite cyclic subgroups of $\SL(6,\Z)$, using GAP. We obtained 123 subgroups. Each one of these gives rise to a group $\Z\ltimes_E \Z^{5}$ which is (isomorphic to) a lattice of an almost abelian flat Lie group. Indeed, conjugating $\phi(t_0)$ via matrices in $\GL(6,\R)$ we can obtain each one of the matrices generating these subgroups, due to the following theorem. 

\begin{teo}\cite{Koo}
	A matrix $A\in\GL(k,\R)$ has finite order if and only if $A$ is similar to $\I_{k_1}\oplus (-\I_{k_2})\oplus \matriz{\cos t_1&-\sin t_1\\\sin t_1&\cos t_1}^{d_1}\oplus \cdots \oplus \matriz{\cos t_r&-\sin t_r\\\sin t_r&\cos t_r}^{d_r}$, where $k_1,k_2,r\geq 0$, $d_1,\ldots, d_r\geq 1$, each $t_i$ is a rational multiple of $2\pi$  with $0<t_1<\cdots<t_r<\pi$, and $k_1+k_2+2(d_1+\cdots+d_r)=k$.
\end{teo}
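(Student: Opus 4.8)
The plan is to prove the two implications separately, dispatching the ($\Leftarrow$) direction by a direct order computation and reducing the ($\Rightarrow$) direction to the real canonical form of an orthogonal matrix.

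For ($\Leftarrow$), suppose $A$ is similar to the displayed block matrix. Since finite order is preserved under similarity and the order of a block diagonal matrix is the $\lcm$ of the orders of its blocks, it is enough to check that each block has finite order. The blocks $\I_{k_1}$ and $-\I_{k_2}$ have order $1$ and at most $2$, and for a rotation block with $t_i=2\pi p_i/q_i$ a rational multiple of $2\pi$ one has $\matriz{\cos t_i&-\sin t_i\\\sin t_i&\cos t_i}^{q_i}=\matriz{\cos(q_i t_i)&-\sin(q_i t_i)\\\sin(q_i t_i)&\cos(q_i t_i)}=\I_2$, so this block has order dividing $q_i$. Hence $A$ has finite order.

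For ($\Rightarrow$), assume $A^m=\I_k$ for some $m\in\N$. First I would replace $A$ by an orthogonal matrix via averaging: the formula $\la u,v\ra:=\sum_{j=0}^{m-1}\la A^j u,A^j v\ra_0$, with $\la\cdot,\cdot\ra_0$ the standard inner product, defines an $A$-invariant inner product (the relation $A^m=\I_k$ makes the telescoped sum close up), so in a $\la\cdot,\cdot\ra$-orthonormal basis $A$ becomes orthogonal; equivalently $A$ is similar over $\R$ to some $B\in\ort(k)$. The core of the argument is then the real spectral decomposition of $B$: complexifying, $B$ is diagonalizable with eigenvalues on the unit circle, and since $B^m=\I_k$ these are $m$-th roots of unity, the real ones being $\pm 1$ and the non-real ones occurring in conjugate pairs $e^{\pm it}$ with $0<t<\pi$. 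For each such pair, writing $w=\operatorname{Re}w+i\operatorname{Im}w$ for an eigenvector with $Bw=e^{it}w$, the real plane spanned by $\operatorname{Re}w$ and $\operatorname{Im}w$ is $B$-invariant, and in the basis $\{\operatorname{Re}w,-\operatorname{Im}w\}$ the matrix of $B$ is exactly $\matriz{\cos t&-\sin t\\\sin t&\cos t}$. Thus $B$, and hence $A$, is similar to a direct sum of $\pm1$ blocks and $2\times2$ rotation blocks.

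It remains to normalize. Each rotation block inherits finite order from $B$, so its angle $t$ satisfies $e^{imt}=1$ and is therefore a rational multiple of $2\pi$; collecting the eigenvalue $+1$ into $\I_{k_1}$, the eigenvalue $-1$ into $-\I_{k_2}$, and grouping equal angles into $d_i$ copies of each rotation, which after reordering become $0<t_1<\cdots<t_r<\pi$, yields the stated normal form, with $k_1+k_2+2(d_1+\cdots+d_r)=k$ merely recording dimensions. I expect the real spectral step to be the main obstacle: one must take care that the passage to $2\times2$ \emph{real} rotation blocks is carried out over $\R$ and with the correct orientation, which is precisely the point of choosing the real basis $\{\operatorname{Re}w,-\operatorname{Im}w\}$ so that $+\sin t$ lands in the lower-left entry (the opposite sign convention would record the rotation by $-t$ instead).
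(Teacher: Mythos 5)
Your proof is correct, but note that the paper itself offers no proof to compare against: this theorem is imported verbatim from the reference [Koo] (R.~Koo, \emph{A classification of matrices of finite order over $\C$, $\R$ and $\Q$}), and is used in the paper only as a tool for conjugating $\phi(t_0)$ to integer matrices. On its own merits, your argument is sound: the ($\Leftarrow$) direction via lcm of block orders is right, the averaging trick does produce an $A$-invariant inner product (the $j=m$ term closing up to the $j=0$ term is exactly where $A^m=\I_k$ enters), and your sign bookkeeping in the basis $\{\operatorname{Re}w,-\operatorname{Im}w\}$ correctly places $+\sin t$ in the lower-left entry. Two small points are worth making. First, the orthogonalization step is dispensable: $A^m=\I_k$ forces the minimal polynomial of $A$ to divide $x^m-1$, which is squarefree over $\C$, so $A$ is diagonalizable over $\C$ with eigenvalues $m$-th roots of unity, and pairing conjugate eigenvalues then yields the real rotation blocks directly --- this is the standard (and Koo's) route, and it is shorter. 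Second, to make your spectral step fully rigorous you should say how the single-plane construction exhausts all of $\R^k$: either induct on orthogonal complements (invariance of the complement is where orthogonality of $B$ pays off, so your averaging step is not wasted), or pick bases of each complex eigenspace and check that the resulting real and imaginary parts are linearly independent (which follows since $w$ and $\bar w$ are eigenvectors for distinct eigenvalues $e^{\pm it}$). As written, you only decompose one invariant plane; the theorem needs the full direct sum.
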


Each of these 123 lattices are non-isomorphic, since we computed the number of subgroups with low index with GAP and this invariant distinguishes them. Thus, we obtain 123 non-diffeomorphic splittable flat solvmanifolds whose holonomy group is finite cyclic.


\subsection{Non almost abelian case} A 7-dimensional non almost abelian flat Lie algebra can be written as $\g=\R^2 \ltimes_{\ad} \R^5$ where $\R^2=\text{span}\{x,y\}$ and in some basis $\B$ of $\z(\g)\oplus [\g,\g]\cong \R^5$, \[\ad_x=(1)\oplus \matriz{0&-a\\a&0}\oplus \matriz{0&-b\\b&0},\quad \ad_y=(1)\oplus \matriz{0&-c\\c&0}\oplus \matriz{0&-d\\d&0},\] where $a^2+c^2\neq 0, b^2+d^2\neq 0$ and $ad-bc\neq 0$.

The corresponding simply-connected Lie group $G$ can be written as $G=\R^2\ltimes_{\phi} \R^5$, where $\phi(tx+sy)=\exp(t\ad_x)\exp(s\ad_y)$. According to Theorem \ref{lattices}, to determine all the splittable lattices in $G$ we have to look for $\{x,y\}$ such that $P^{-1} \exp(\ad_x) P=A$ and $P^{-1}\exp(\ad_y) P=B$ with $A,B\in\GL(5,\Z)$, for some $P\in \GL(5,\R)$.

There are 6079 finite subgroups of $\GL(5,\Z)$. Using GAP, we extract from these the 2-generated abelian finite subgroups of $\SL(5,\Z)$. Some subgroups cannot give rise to a group $\Z^2 \ltimes_{A,B} \Z^5$ isomorphic to a lattice of a flat Lie group since the rank of the abelianization is even. This contradicts the fact that the Kähler even-dimensional flat solvmanifold obtained by multiplying by $S^1$ must have even first Betti number (and $b_1(M\times S^1)=b_1(M)+1$). Discarding these subgroups, we are left with 45 subgroups, which all give rise to a group $\Z^2 \ltimes_{A,B} \Z^5$ which is (isomorphic to) a lattice of a flat Lie group $\R^2\ltimes_\phi \R^5$, as Table 1 shows. Again, we distinguish the lattices computing the number of subgroups of low index. 

Therefore, we get 45 non-diffeomorphic splittable flat solvmanifolds. Note that all these solvmanifolds satisfy the condition $c=-d$ (as Table 1 shows), which will be important in the next section. 

\begin{obs}
	The computations performed in GAP for both the almost abelian case and the non almost abelian case are available in the web page  \href{https://github.com/atolcachier/7-dimensional-splittable-flat-solvmanifolds}{\textcolor{blue}{https://github.com/atolcachier/7-dimensional-splittable-flat-solvmanifolds}}.
\end{obs}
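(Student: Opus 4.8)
The final statement is a Remark rather than a mathematical proposition: it asserts only that the supporting GAP code for both the almost abelian and the non almost abelian classifications is hosted at a public repository. As such it carries no logical content to be deduced from the earlier results, and it admits no proof in the usual sense. What it requires instead is \emph{verification and reproducibility}, so the plan is to describe how a reader would confirm the claim and, more importantly, re-derive the two numerical outputs the code is meant to certify, namely the $123$ lattices of the almost abelian case and the $45$ lattices of the non almost abelian case.

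First I would navigate to the cited URL and confirm that the repository exists and contains two executable GAP scripts, one per case. For the almost abelian case the script should import the classification of finite subgroups of $\GL(6,\Z)$, filter out the finite cyclic subgroups of $\SL(6,\Z)$, and return $123$ conjugacy classes, in agreement with Theorem~\ref{valuest0} and Koo's theorem as applied in the text. For the non almost abelian case the script should enumerate the $6079$ finite subgroups of $\GL(5,\Z)$, extract the $2$-generated abelian finite subgroups of $\SL(5,\Z)$, discard those whose abelianization has even rank (the Betti-number obstruction invoked just above, coming from $b_1(M\times S^1)=b_1(M)+1$ together with the Kähler parity constraint), and return the surviving $45$ classes displayed in Table~1. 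A genuine check is then to re-run each script and confirm it reproduces exactly these counts, and to confirm that the low-index subgroup routine reproduces the invariants used to distinguish the lattices up to isomorphism (hence up to diffeomorphism by Mostow's theorem).

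To make the verification robust I would cross-check the ambient input against the crystallographic data at the Kyoto database cited in the preliminaries, ensuring the scripts consume the same $\GL(n,\Z)$ tables that CARAT produced, and I would spot-check a handful of individual subgroups by hand against the block form~\eqref{phi2} in the almost abelian case and against the explicit $\ad_x,\ad_y$ of the non almost abelian case. There is no mathematical obstacle here, precisely because the statement is not a theorem; the only genuine difficulty is practical, namely software and data provenance — pinning down the GAP version, the imported classification tables, and the low-index algorithm actually used, so that an independent run returns the same $123$ and $45$ with no hidden dependencies. In all honesty, any ``proof'' of this Remark is a certificate of reproducibility rather than a deduction, and its correctness stands or falls with the posted code.
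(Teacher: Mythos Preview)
Your assessment is correct: the statement is a bibliographic remark pointing to supplementary code, not a theorem, and the paper accordingly offers no proof for it. Your discussion of reproducibility is a sensible elaboration, but strictly speaking nothing is required here beyond the observation that there is nothing to prove.
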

\tiny\begin{center}
	\begin{tabular}{|c|c|c|}
		\hline
		
		$(a,b),(c,d)$ & Matrix which conjugates & Matrices which generate the subgroup \\ 
		
		\hline
		
		$(\pi,2\pi),(\pi,-\pi)$			&	$\matriz{0&0&1\\0&1&0\\1&0&0}\oplus \I_2$	&		$\diag(-1,-1,1,1,1), \diag(-1,-1,1,-1,-1)$ \\
		
		& 					$\matriz{0&0&0&1&0\\ 0&1&0&0&0\\0&0&1&0&0\\1&0&0&0&1\\-1&0&0&0&1}^{-1}$						&       $\diag(1,-1,-1,1,1), (-1)\oplus (-\I_2)\oplus \matriz{0&-1\\-1&0}$ \\
		
		&								$\matriz{0&1&0&0&0\\0&0&0&1&1\\1&0&0&0&0\\0&0&1&0&0\\0&0&0&-1&1}^{-1}$				& 	$(-1)\oplus \I_2 \oplus \matriz{0&-1\\-1&0}, \diag(-1,1,-1,-1,-1)$ \\
		
		& $\matriz{0&0&1&0&1\\0&0&0&1&0\\1&0&0&0&0\\0&0&1&1&-1\\0&1&0&0&0}$	& $(-1)\oplus (1)\oplus \matriz{1&1&0\\0&-1&0\\0&-1&1}, -\I_2\oplus \matriz{0&0&1\\0&-1&0\\1&0&0}$ \\
		
		& 									$\matriz{0&0&1&1&-1\\1&0&0&0&0\\0&0&1&1&1\\0&0&1&-1&1\\0&1&0&0&0}$							& $(-1)\oplus (1)\oplus \matriz{1&0&0\\-1&0&-1\\-1&-1&0}, -\I_2\oplus\matriz{0&1&-1\\0&-1&0\\-1&-1&0}$ \\
		
		& 		$\matriz{0&0&1&0&-1\\0&0&1&0&1\\0&1&0&1&0\\0&1&0&-1&0\\1&0&0&0&0}$			&  $(1)\oplus \matriz{0&0&-1&0\\0&0&0&-1\\-1&0&0&0\\0&-1&0&0}, -\I_2\oplus \matriz{0&0&-1\\0&-1&0\\-1&0&0} $ \\
		
		&   $(1)\oplus \matriz{1&0&0&1\\0&1&1&0\\0&1&-1&0\\1&0&0&-1}$ & $(1)\oplus \matriz{0&0&0&-1\\0&0&-1&0\\0&-1&0&0\\-1&0&0&0}, (1)\oplus -\I_4 $ \\
		
		& 			$\matriz{0&0&1&0&0\\0&1&0&0&1\\1&0&0&1&0\\0&1&1&0&-1\\1&0&0&-1&0}$	&  $\matriz{0&0&0&-1&0\\0&0&0&0&-1\\0&0&1&0&0\\-1&0&0&0&0\\0&-1&0&0&0}, (-1)\oplus\matriz{-1&-1&0&0\\0&1&0&0\\0&0&-1&0\\0&1&0&-1}$ \\
		
		& 		 $\matriz{0&0&1&-1&-1\\1&1&0&0&0\\0&0&1&1&1\\1&-1&0&0&0\\0&0&1&-1&1}$	& $\matriz{0&-1\\-1&0}\oplus \matriz{0&-1&-1\\-1&0&-1\\0&0&1}, -\I_2\oplus \matriz{0&-1&-1\\0&-1&0\\-1&1&0}$\\
		
		\hline
		$(2\pi,\frac{\pi}{2}),(\pi,-\pi)$ & $\matriz{0&0&1&0&0\\0&0&0&1&0\\0&0&0&0&1\\1&0&0&0&0\\0&-1&0&0&0}$     & $\matriz{0&1\\-1&0}\oplus \I_3, -\I_2\oplus (1) \oplus -\I_2$ \\

		&	$\matriz{0&0&1&1&-1\\1&0&0&0&0\\0&1&0&0&0\\0&0&1&0&1\\0&0&0&1&0}$				& $\I_2\oplus \matriz{0&0&-1\\1&0&1\\0&-1&1}, -\I_2\oplus \matriz{0&1&-1\\0&-1&0\\-1&-1&0}$ \\
		
		& $(1)\oplus\matriz{0&1&1&-1\\1&0&0&0\\0&1&0&1\\0&0&1&0}$ & $\I_2\oplus\matriz{0&0&-1\\1&0&1\\0&-1&1}, (1)\oplus (-\I_4)$ \\
		
		& 		$\matriz{0&0&0&1&1\\0&0&0&1&-1\\0&0&1&0&0\\1&0&0&0&0\\0&-1&0&0&0}$	& $\matriz{0&1\\-1&0}\oplus \I_3, -\I_3\oplus \matriz{0&1\\1&0}$ \\
		
		&			$\matriz{0&0&1&0&0\\0&1&1&1&-1\\1&0&0&0&0\\0&1&0&0&1\\0&0&0&-1&0}$			& $ \matriz{1&0&0&0&0\\0&1&0&1&0\\0&0&1&0&0\\0&-1&0&0&-1\\0&-1&0&0&0}, \matriz{-1&0&0&0&0\\0&-1&-1&0&0\\0&0&1&0&0\\0&0&0&-1&0\\0&0&1&0&-1}$ \\
		
		& $\matriz{0&0&0&1&1\\1&0&0&0&0\\0&0&0&1&-1\\0&1&1&0&0\\0&-1&1&-1&-1}$			&  $(1)\oplus \matriz{0&-1&0&0\\1&0&1&1\\0&0&1&0\\0&0&0&1}, (-1)\oplus \matriz{-1&0&-1&-1\\0&-1&1&1\\0&0&0&1\\0&0&1&0}$ \\
			\end{tabular}
	\end{center}
\tiny\begin{center}
\begin{tabular}{|c|c|c|}	

		& $\matriz{0&0&1&0&1\\1&1&0&-1&0\\0&0&1&0&-1\\0&-1&0&0&0\\1&0&0&1&0}$	& $\matriz{0&0&0&-1&0\\1&0&0&1&0\\0&0&1&0&0\\0&-1&0&1&0\\0&0&0&0&1}, -\I_2\oplus \matriz{0&0&1\\0&-1&0\\1&0&0}$ \\
		\hline	
		$(2\pi,\frac{\pi}{2}),(\frac{\pi}{2},-\frac{\pi}{2})$	& $\matriz{0&0&1\\1&0&0\\0&1&0}\oplus \I_2$				& $\I_3\oplus \matriz{0&-1\\1&0}, \matriz{0&-1\\1&0}\oplus (1)\oplus \matriz{0&1\\-1&0}$ \\
		
		&  	$\matriz{0&0&1&1&-1\\0&0&1&0&1\\0&0&0&-1&0\\1&0&0&0&0\\0&1&0&0&0}$	&	$\matriz{0&-1\\1&0}\oplus \I_3, \matriz{0&1\\-1&0}\oplus\matriz{1&1&0\\-1&0&-1\\-1&0&0}$ \\
		
		&		$(1)\oplus\matriz{0&1&0&0\\-1&0&0&-1\\1&1&0&-1\\1&1&2&-1}$		& $(1)\oplus\matriz{0&-1&-1&1\\0&1&0&0\\1&1&1&-1\\1&1&1&0}, (1)\oplus\matriz{0&0&1&-1\\1&0&0&1\\-1&-1&-1&1\\0&-1&-1&1}$ \\
		
		&  $\matriz{0&0&1&0&0\\1&0&0&0&1\\0&-1&0&0&0\\0&0&0&-1&0\\1&1&1&1&-1}$ & $\matriz{0&-1&-1&-1&1\\0&1&0&0&0\\0&0&1&0&0\\1&1&1&1&-1\\1&1&1&1&0}, \matriz{1&1&0&1&0\\-1&0&0&0&-1\\0&0&1&0&0\\-1&-1&-1&-1&1\\-1&0&0&-1&0}$ \\
		
		& $\matriz{0&0&0&1&1\\0&0&0&1&-1\\0&2&0&1&-1\\1&0&1&0&0\\-1&0&1&1&-1}$ & $\matriz{0&0&-1&0&0\\0&1&0&0&0\\1&0&0&-1&1\\0&0&0&1&0\\0&0&0&0&1}, \matriz{0&-1&1&0&0\\0&1&0&1&-1\\-1&1&0&1&-1\\0&-1&0&0&1\\0&1&0&1&0}$ \\
		
		\hline
		
		$(2\pi,\pi), (\frac{\pi}{2},-\frac{\pi}{2})$ & $\matriz{0&0&1\\1&0&0\\0&1&0}\oplus \I_2$ & $\I_3\oplus (-\I_2), \matriz{0&-1\\1&0}\oplus (1)\oplus \matriz{0&1\\-1&0}$ \\
		
		& $\matriz{0&0&1&1&-1\\0&0&1&0&1\\0&0&0&-1&0\\1&0&0&0&0\\0&1&0&0&0}$ & $-\I_2\oplus \I_3, \matriz{0&1\\-1&0}\oplus \matriz{1&1&0\\-1&0&-1\\-1&0&0}$ \\
		
		& $(1)\oplus \matriz{1&0&0&1\\0&1&0&0\\1&1&1&-1\\0&0&1&0}$ & $(1)\oplus \matriz{0&-1&0&1\\0&1&0&0\\0&0&-1&0\\1&1&0&0}, (1)\oplus\matriz{0&0&1&-1\\1&0&0&1\\-1&-1&-1&1\\0&-1&-1&1}$ \\
		
		& $\matriz{0&0&1&0&0\\1&0&0&0&1\\0&-1&0&0&0\\1&1&1&1&-1\\0&0&0&1&0}$ & $\matriz{0&-1&-1&0&1\\0&1&0&0&0\\0&0&1&0&0\\0&0&0&-1&0\\1&1&1&0&0}, \matriz{1&1&0&1&0\\-1&0&0&0&-1\\0&0&1&0&0\\-1&-1&-1&-1&1\\-1&0&0&-1&0}$ \\
		
		& $\matriz{0&0&0&1&1\\0&0&0&1&-1\\0&2&0&1&-1\\1&0&1&0&0\\-1&0&1&1&-1}$ & $\matriz{-1&0&0&1&-1\\0&1&0&0&0\\0&0&-1&-1&1\\0&0&0&1&0\\0&0&0&0&1}, \matriz{0&-1&1&0&0\\0&1&0&1&-1\\-1&1&0&1&-1\\0&-1&0&0&1\\0&1&0&1&0}$ \\
				
		& $(1)\oplus \matriz{1&0&0&1\\0&1&1&0\\1&0&0&-1\\0&1&-1&0}$ & $(1)\oplus\matriz{0&0&0&1\\0&0&1&0\\0&1&0&0\\1&0&0&0}, (1)\oplus \matriz{0&0&-1&0\\0&0&0&1\\1&0&0&0\\0&-1&0&0}$\\
		
		& $\matriz{0&0&0&0&1\\1&0&1&0&0\\0&-1&0&-1&0\\1&0&-1&0&1\\0&-1&0&1&-1}$ & $\matriz{0&0&1&0&-1\\0&0&0&1&-1\\1&0&0&0&1\\0&1&0&0&1\\0&0&0&0&1}, \matriz{0&0&0&1&-1\\0&0&-1&0&0\\0&1&0&0&1\\-1&0&0&0&0\\0&0&0&0&1}$ \\
		
		& $\matriz{1&-1&-1&1&-1\\1&0&1&1&0\\0&1&0&0&1\\1&0&1&-1&0\\0&-1&0&0&1}$ & $\matriz{0&0&-1&1&0\\0&0&0&0&1\\0&0&1&0&0\\1&0&1&0&0\\0&1&0&0&0}, \matriz{1&-1&0&1&0\\1&0&1&0&0\\-1&0&0&-1&0\\0&0&0&0&-1\\0&0&0&1&0}$ \\
		
		\hline 
		$(\frac{\pi}{3},2\pi),(\pi,-\pi)$ & $\matriz{0&0&1\\0&-\sqrt{3}&0\\2&-1&0}\oplus \I_2$ & $\matriz{1&-1\\1&0}\oplus \I_3, -\I_2\oplus (1)\oplus (-\I_2)$ \\
		
			& $\matriz{0&0&0&1&1\\0&-1&2&0&0\\0&\sqrt{3}&0&0&0\\0&0&0&1&-1\\1&0&0&0&0}$ & $(1)\oplus\matriz{0&1\\-1&1}\oplus \I_2, -\I_3\oplus \matriz{0&1\\1&0}$ \\
		\end{tabular}
	\end{center}

\tiny\begin{center}
	\begin{table}[H]
\begin{tabular}{|c|c|c|}

			$(\frac{2\pi}{3},\frac{\pi}{2}),(\pi,-\pi)$ & $\matriz{0&0&1&0&0\\2&-1&0&0&0\\0&\sqrt{3}&0&0&0\\0&0&0&1&0\\0&0&0&0&-1}$ & $\matriz{0&-1\\1&-1}\oplus (1)\oplus \matriz{0&1\\-1&0}, -\I_2\oplus (1)\oplus (-\I_2)$ \\
			
			& $\matriz{0&0&1&0&1\\2&-1&0&0&0\\0&\sqrt{3}&0&0&0\\0&0&1&1&-1\\0&0&0&-1&0}$ &$\matriz{0&-1\\1&-1}\oplus \matriz{1&1&0\\-1&-1&1\\0&-1&1}, -\I_2\oplus \matriz{0&0&1\\0&-1&0\\1&0&0}$ \\
			
			\hline 
			
			$(\frac{\pi}{3},\frac{2\pi}{3}),(\pi,-\pi)$ & $\matriz{0&0&1&0&0\\2&-1&0&0&0\\0&\sqrt{3}&0&0&0\\0&0&0&-1&2\\0&0&0&\sqrt{3}&0}$ & $\matriz{1&-1\\1&0}\oplus (1)\oplus \matriz{-1&1\\-1&0}, -\I_2\oplus (1)\oplus (-\I_2)$ \\
			
			& $(1)\oplus \matriz{0&\sqrt{3}&\sqrt{3}&0\\2&-1&-1&2\\2&-1&1&-2\\0&\sqrt{3}&-\sqrt{3}&0}$ &$(1)\oplus \matriz{0&0&1&0\\0&0&1&-1\\-1&1&0&0\\0&1&0&0}, (1)\oplus(-\I_4)$ \\
			
			\hline
			
			$(2\pi,\frac{\pi}{3}),(\frac{2\pi}{3},-\frac{2\pi}{3})$ & $\matriz{0&0&1&0&0\\2&1&0&0&0\\0&\sqrt{3}&0&0&0\\0&0&0&-1&2\\0&0&0&\sqrt{3}&0}$ & $\I_3\oplus \matriz{0&1\\-1&1}, \matriz{-1&-1\\1&0}\oplus(1)\oplus\matriz{0&-1\\1&-1}$ \\
			
			& $\matriz{0&0&1&1&-1\\0&0&-1&2&1\\0&0&\sqrt{3}&0&\sqrt{3}\\-\sqrt{3}&0&0&0&0\\-1&2&0&0&0}$ & $\matriz{0&1\\-1&1}\oplus \I_3, \matriz{0&-1\\1&-1}\oplus\matriz{0&1&0\\0&0&-1\\-1&0&0}$ \\
			
			\hline 
			
			$(2\pi,\frac{2\pi}{3}),(\frac{\pi}{3},-\frac{\pi}{3})$ & $\matriz{0&0&1&0&0\\2&-1&0&0&0\\0&\sqrt{3}&0&0&0\\0&0&0&2&1\\0&0&0&0&\sqrt{3}}$ & $\I_3\oplus\matriz{-1&-1\\1&0},\matriz{1&-1\\1&0}\oplus(1)\oplus\matriz{1&1\\-1&0}$ \\
			
			& $(1)\oplus\matriz{2&-2&2&1\\0&0&0&\sqrt{3}\\-1&1&2&-2\\\sqrt{3}&\sqrt{3}&0&0}$ & $(1)\oplus\matriz{0&0&1&-1\\-1&0&0&0\\0&-1&0&1\\0&0&0&1},(1)\oplus\matriz{0&0&-1&0\\1&0&0&1\\1&0&1&0\\1&-1&1&1}$ \\
			\hline
			$(2\pi,\frac{\pi}{3}),(\frac{\pi}{3},-\frac{\pi}{3})$ & $\matriz{0&0&1&0&0\\2&-1&0&0&0\\0&\sqrt{3}&0&0&0\\0&0&0&-1&2\\0&0&0&\sqrt{3}&0}$ & $\I_3\oplus\matriz{0&1\\-1&1}, \matriz{1&-1\\1&0}\oplus(1)\oplus\matriz{1&-1\\1&0}$ \\

			$(2\pi,\frac{2\pi}{3}),(\frac{2\pi}{3},-\frac{2\pi}{3})$ & $\matriz{0&0&1&0&0\\2&-1&0&0&0\\0&\sqrt{3}&0&0&0\\0&0&0&-1&2\\0&0&0&\sqrt{3}&0}$ & $\I_3\oplus\matriz{-1&1\\-1&0}, \matriz{0&-1\\1&-1}\oplus(1)\oplus\matriz{0&-1\\1&-1}$ \\
			
			& $\matriz{0&0&1&-1&-1\\0&0&2&1&1\\0&0&0&-\sqrt{3}&\sqrt{3}\\-1&2&0&0&0\\\sqrt{3}&0&0&0&0}$ & $\matriz{-1&1\\-1&0}\oplus \I_3, \matriz{0&-1\\1&-1}\oplus\matriz{0&0&-1\\-1&0&0\\0&1&0}$ \\
			
			&  $\matriz{1&0&0&0&0\\0&2&1&1&-1\\0&0&\sqrt{3}&\sqrt{3}&\sqrt{3}\\0&2&-2&1&2\\0&0&0&-\sqrt{3}&0}$ & $\matriz{1&0&0&0&0\\0&1&0&1&0\\0&1&0&1&1\\0&-1&1&-1&-1\\0&0&0&1&1},\matriz{1&0&0&0&0\\0&-1&0&-1&-1\\0&0&0&0&-1\\0&1&-1&0&1\\0&0&1&0&-1}$ \\
			
			& $\matriz{1&-1&1&-1&-1\\0&2&1&0&-1\\0&0&\sqrt{3}&0&\sqrt{3}\\1&0&0&2&0\\\sqrt{3}&0&0&0&0}$ & $\matriz{0&0&0&1&0\\0&1&0&1&0\\0&0&1&-1&0\\-1&0&0&-1&0\\0&0&0&1&1}, \matriz{-1&0&0&-1&0\\-1&0&-1&0&0\\1&0&0&0&-1\\1&0&0&0&0\\-1&1&0&0&0}$ \\
			
			& $\matriz{1&1&-1&-1&1\\2&-1&-2&1&-1\\0&\sqrt{3}&0&\sqrt{3}&\sqrt{3}\\2&2&1&1&-1\\0&0&-\sqrt{3}&-\sqrt{3}&-\sqrt{3}}$ & $\matriz{0&-1&-1&-1&0\\0&1&1&1&1\\-1&-1&0&-1&0\\0&0&0&1&0\\0&0&-1&-1&0}, \matriz{0&1&0&-1&0\\0&-1&-1&0&0\\0&1&0&0&0\\0&-1&0&0&-1\\1&1&0&0&0}$ \\
			\hline
		\end{tabular}
		\caption{7-dimensional splittable non almost abelian flat solvmanifolds}
		\label{Table 2}
	\end{table}
\end{center}

\normalsize

\section{Examples of $G_2$-structures on flat solvmanifolds}
The aim of this section is to study the existence of invariant closed and coclosed $G_2$-structures in the flat solvmanifolds we found in the previous section.

\subsection{Almost abelian solvmanifolds}
Let $\g_{a,b,c}=\R x\ltimes_{\ad_x} \R^6$ be a flat almost abelian Lie algebra and $G_{a,b,c}=\R \ltimes_{\phi} \R^6$ its corresponding simply-connected Lie group, where \begin{align*} 
	\ad_x&=\matriz{0&-a\\a&0}\oplus\matriz{0&-b\\b&0}\oplus\matriz{0&-c\\c&0},\quad a^2+b^2+c^2\neq 0,\\
\phi(t)&=\matriz{\cos(at)&-\sin(at)\\ \sin(at)& \cos (at)} \oplus \matriz{\cos(bt)&-\sin(bt)\\ \sin(bt)&\cos(bt)}\oplus\matriz{\cos(ct)&-\sin(ct)\\ \sin(ct)&\cos(ct)}.
\end{align*}
The Lie brackets of $\g_{a,b,c}$ are given by 
\begin{align*}
	[e_1,e_2]= a e_3,& \quad [e_1, e_4]=b e_5,\quad [e_1,e_6]=c e_7, \\
	[e_1,e_3]= -a e_2&, \quad [e_1, e_5]=-b e_4, \quad [e_1,e_7]= -c e_6.
\end{align*}
Therefore, the Chevalley-Eilenberg differential $\d\,:\alt^1 \g_{a,b,c}^*\to \alt^2\g_{a,b,c}^*$ is given by 
\begin{align}\label{dif1}
	\d e^2= a e^{13}, &\quad \d e^3=-a e^{12}, \quad \d e^4=b e^{15},\\
	\d e^5=-b e^{14}, &\quad \d e^6=c e^{17}, \quad \d e^7=-c e^{16}. \nonumber
\end{align}

Let $\varphi\in \alt^3 \g_{a,b,c}^*$ be the positive form given by  \begin{equation}\label{phi} \varphi=e^{123}+e^{145}+e^{167}+e^{246}-e^{257}-e^{347}-e^{356}.
	\end{equation}  Note that $\{e_1,\ldots,e_7\}$ is an orthonormal basis for the induced metric $g_\varphi$.
 \begin{prop}
 	The 3-form $\varphi$ as above is coclosed for any choice of $a,b,c$, and it is closed (therefore torsion-free) if and only if $a+b+c=0$.
 \end{prop}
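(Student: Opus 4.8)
The plan is to compute $\d\varphi$ and $\d\star_\varphi\varphi$ directly using the Chevalley--Eilenberg differential given in \eqref{dif1}, and then read off the closedness and coclosedness conditions. Since $\varphi$ is a constant-coefficient $3$-form in the left-invariant coframe $\{e^i\}$, and $\d$ is a derivation that is already known on the generators $e^i$, both computations reduce to purely algebraic manipulations in the exterior algebra $\alt^\bullet \g_{a,b,c}^*$.

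First I would establish coclosedness. Because $\{e_1,\ldots,e_7\}$ is orthonormal for $g_\varphi$, the Hodge star $\star_\varphi$ acts on the monomials $e^{ijk}$ in the standard way (up to the sign fixed by the orientation $\operatorname{vol}_\varphi=e^{1234567}$), so $\star_\varphi\varphi$ is again a constant-coefficient form, namely the $4$-form dual to \eqref{phi}. Explicitly one gets $\star_\varphi\varphi=e^{4567}+e^{2367}+e^{2345}+e^{1357}-e^{1346}-e^{1256}-e^{1247}$ (I would verify each of the seven terms by pairing complementary index sets and tracking signs). Applying $\d$ termwise via \eqref{dif1} and the Leibniz rule, every resulting $5$-form should cancel. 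The mechanism is that each generator $e^i$ with $i\geq 2$ has $\d e^i$ proportional to a $2$-form containing $e^1$, so $\d$ applied to any monomial already containing $e^1$ together with the ``paired'' coordinates produces repeated factors that vanish; the surviving terms pair off with opposite signs. I expect $\d\star_\varphi\varphi=0$ identically in $a,b,c$, giving coclosedness with no constraint.

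Next I would compute $\d\varphi$ term by term. Applying the derivation $\d$ to each of the seven monomials in \eqref{phi} and substituting \eqref{dif1}, the contributions of the form $e^{1\cdots}$ combine, and I expect the net result to collapse to a single multiple of a fixed $4$-form, of the shape $\d\varphi=(a+b+c)\,\Omega$ for some nonzero constant $4$-form $\Omega$ (up to normalization, $\Omega$ should be $e^{1}\wedge(\text{something})$, reflecting that all of $\d e^{i}$ feed in an $e^1$). Once this is shown, $\d\varphi=0$ if and only if $a+b+c=0$, which is the closedness criterion. Since a $G_2$-structure that is both closed and coclosed is torsion-free by the Fernández--Gray theorem \cite{FG82}, the ``torsion-free'' conclusion follows immediately from the conjunction of the two computations.

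The main obstacle is bookkeeping rather than conceptual: correctly fixing the orientation and the signs in the seven terms of $\star_\varphi\varphi$, and then tracking the signs through the Leibniz expansion so that the cancellations in $\d\star_\varphi\varphi$ and the collapse of $\d\varphi$ to the clean factor $(a+b+c)$ actually come out. I would guard against sign errors by computing $\d\varphi\wedge\varphi$ and using the first identity in \eqref{tau3} to cross-check $\tau_0$, and by confirming that the coefficient of $e^{123}$-type and $e^{145}$-type images in $\d\varphi$ each contribute with the same sign so that they add (rather than cancel) to produce $a+b+c$. A convenient sanity check is the symmetry of the problem under permuting the three $2$-planes carrying $a$, $b$, $c$: the closedness condition must be symmetric in $a,b,c$, and $a+b+c=0$ is the unique symmetric linear condition, which makes the expected answer structurally plausible before grinding through the signs.
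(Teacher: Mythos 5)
Your proposal is correct and takes essentially the same route as the paper: a direct computation of $\d\varphi$ and of $\d\star_\varphi\varphi$ from the structure equations \eqref{dif1}, using the same seven-term expression for $\star_\varphi\varphi$, which yields $\d\star_\varphi\varphi=0$ identically and $\d\varphi=(a+b+c)\,\Omega$ for a fixed nonzero $4$-form $\Omega$. Your structural prediction that $\Omega$ is of the form $e^1\wedge(\cdots)$ is in fact right, since every $\d e^i$ is proportional to a $2$-form containing $e^1$; carrying out the bookkeeping gives $\d\varphi=(a+b+c)(e^{1247}+e^{1256}+e^{1346}-e^{1357})$.
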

\begin{proof}
	We compute $\d \varphi$ using \eqref{dif1}: \[\d\varphi=(a+b+c)(e^{2467}-e^{2357}-e^{1457}-e^{1367}).\] 
	In addition, \[\star_\varphi\varphi=-e^{1247}-e^{1256}-e^{1346}+e^{1357}+e^{2345}+e^{2367}+e^{4567}.\] Again, using \eqref{dif1} it is easily obtained that $\d\star_\varphi\varphi=0$.
\end{proof}
\begin{obs}
	This proposition coincides with \cite{F1,F2} where the existence of closed and coclosed $G_2$-structures on any almost abelian Lie algebra is studied. Indeed, there it is established that $\varphi$ is closed if and only if $\ad_x\in \mathfrak{sl}(3,\C)$ (i.e. $a+b+c=0$) and is coclosed if and only if $\ad_x\in \mathfrak{sp}(3,\R)$ (which always happens in our case, because $\ad_x$ is skew-symmetric).
\end{obs}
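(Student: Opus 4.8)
The plan is to verify the two torsion conditions directly from the Chevalley--Eilenberg differential \eqref{dif1}, which reduces the entire problem to an elementary computation with the exterior algebra. Since $\{e_1,\dots,e_7\}$ is orthonormal for $g_\varphi$, the Hodge star acts on basis monomials by the usual rule $\star_\varphi(e^{i_1\cdots i_k})=\pm\, e^{j_1\cdots j_{7-k}}$, where $\{j_1,\dots,j_{7-k}\}$ is the complementary index set and the sign is the signature of the permutation sending $(i_1,\dots,i_k,j_1,\dots,j_{7-k})$ to $(1,\dots,7)$; no conceptual input is needed beyond this.

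First I would compute $\d\varphi$ term by term. The form $\varphi$ in \eqref{phi} is a sum of seven monomials, and $\d$ is a derivation that annihilates $e^1$ and sends each of $e^2,\dots,e^7$ to a $2$-form given by \eqref{dif1}. Applying the graded Leibniz rule to each monomial and collecting terms, I expect massive cancellation: the coefficients $a,b,c$ should organize so that every surviving $4$-form monomial carries the common factor $(a+b+c)$. This yields the claimed expression $\d\varphi=(a+b+c)(e^{2467}-e^{2357}-e^{1457}-e^{1367})$, and since the four monomials in parentheses are linearly independent, $\d\varphi=0$ if and only if $a+b+c=0$. Given the Fern\'andez--Gray theorem quoted in the introduction, closedness together with coclosedness then forces the $G_2$-structure to be torsion-free.

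Next I would compute $\star_\varphi\varphi$ by applying the complementation-and-sign rule to each of the seven monomials of $\varphi$, obtaining the stated $4$-form $\star_\varphi\varphi=-e^{1247}-e^{1256}-e^{1346}+e^{1357}+e^{2345}+e^{2367}+e^{4567}$. Then I would apply $\d$ to this $4$-form, again using \eqref{dif1} and the Leibniz rule. Here I expect that each monomial either is already closed or produces terms that cancel in pairs regardless of the values of $a,b,c$, giving $\d\star_\varphi\varphi=0$ identically. This establishes coclosedness for every choice of the parameters, matching the fact that $\ad_x$ is skew-symmetric (hence in $\mathfrak{sp}(3,\R)$) as noted in the remark.

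The computation itself is routine but bookkeeping-heavy; the main obstacle is purely clerical, namely keeping track of signs in both the Leibniz expansions and the Hodge star, so that the cancellations are recorded correctly. A useful sanity check at each stage is to confirm that the coefficient of every $4$-form monomial in $\d\varphi$ is a scalar multiple of $a+b+c$ (as dictated by the known criterion $\ad_x\in\mathfrak{sl}(3,\C)$ for closedness), and that $\d\star_\varphi\varphi$ vanishes with no residual dependence on $a,b,c$. Once these two displayed identities are in hand, the proposition follows immediately.
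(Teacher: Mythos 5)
Your proposal is correct and is essentially the paper's own proof: compute $\d\varphi$ and $\d\star_\varphi\varphi$ directly from \eqref{dif1}, observe that every surviving monomial of $\d\varphi$ carries the common factor $(a+b+c)$, and check that $\d\star_\varphi\varphi=0$ identically for all $a,b,c$. One correction, though: the expression you say the computation will yield cannot be right, because each $\d e^i$ in \eqref{dif1} is a multiple of $e^1\wedge e^j$, so every monomial of $\d\varphi$ must contain $e^1$, whereas $e^{2467}$ and $e^{2357}$ do not; carrying out the computation actually gives $\d\varphi=(a+b+c)(e^{1247}+e^{1256}+e^{1346}-e^{1357})$ (the paper's displayed formula shares this defect, so it appears to be a typo there), which still vanishes exactly when $a+b+c=0$, so the conclusion and your argument are unaffected.
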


\medskip

\begin{prop}
	Up to isomorphism of the induced lattices, the values of $(at_0, bt_0, ct_0)$ such that $\phi(t_0)$ is similar to an integer matrix and  $at_0+bt_0+ct_0=0$ are the following:
	\[\left(\frac{2\pi}{7},\frac{4\pi}{7},-\frac{6\pi}{7}\right), \left(\pi,-\frac{\pi}{6},-\frac{5\pi}{6}\right),\left(\frac{2\pi}{3},\frac{\pi}{6},-\frac{5\pi}{6}\right),\left(\pi,-\frac{\pi}{4},-\frac{3\pi}{4}\right),\left(\frac{\pi}{2},-\frac{\pi}{4},\frac{3\pi}{4}\right),\]
	\[\left(2\pi,2\pi,-4\pi\right),\left(2\pi,-\pi,-\pi\right),\left(2\pi,-\frac{\pi}{2},-\frac{3\pi}{2}\right),\left(2\pi, -\frac{\pi}{3},-\frac{5\pi}{3}\right),\left(2\pi,-\frac{2\pi}{3},-\frac{4\pi}{3}\right),\]
	\[\left(\pi,-\frac{\pi}{2},-\frac{\pi}{2}\right),\left(\pi,-\frac{2\pi}{3},-\frac{\pi}{3}\right),\left(\frac{\pi}{3},\frac{\pi}{3},-\frac{2\pi}{3}\right),\left(\frac{2\pi}{3},\frac{2\pi}{3},-\frac{4\pi}{3}\right).\]
\end{prop}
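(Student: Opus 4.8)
The plan is to intersect Theorem \ref{valuest0} with the closedness condition. By the previous proposition the induced $G_2$-structure is closed precisely when $a+b+c=0$, equivalently (since $t_0>0$) when $at_0+bt_0+ct_0=0$. The key tool is the remark recorded just before Theorem \ref{valuest0}: replacing any one of $at_0,bt_0,ct_0$ by $2\pi k\pm(\cdot)$ leaves $\phi(t_0)$ in the same conjugacy class and hence yields an isomorphic lattice. Thus each angle is only well defined modulo the equivalence $\theta\sim\pm\theta+2\pi\Z$, and for a fixed admissible block-multiset I only need to decide whether representatives can be chosen, one per class, with zero sum. I would first reduce this to a congruence test: such representatives $\tilde\theta_1+\tilde\theta_2+\tilde\theta_3=0$ exist if and only if $\pm\theta_1\pm\theta_2\pm\theta_3\equiv 0\pmod{2\pi}$ for some choice of signs, since once a signed sum equals $2\pi m$ one removes $2\pi m$ from a single entry without changing its class.

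Next I would run this test through the three families of Theorem \ref{valuest0}. For Case (3) each triple is tested directly: writing $(\tfrac{2\pi}{7},\tfrac{4\pi}{7},\tfrac{6\pi}{7})$ in units of $\tfrac{2\pi}{7}$ as $(1,2,3)$ gives $1+2-3\equiv 0\pmod 7$, so it survives as $(\tfrac{2\pi}{7},\tfrac{4\pi}{7},-\tfrac{6\pi}{7})$, whereas the other three reduce to $(1,2,4)$, $(1,3,5)$, $(1,5,7)$ modulo $9,14,18$ respectively; each has odd total, so no signed sum is $0$, and the nearest nonzero multiple of the modulus lies outside the achievable range, ruling them out. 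For Case (2) the $b,c$-angles contribute signed partial sums $\pm\theta_2\pm\theta_3$: the pairs $(\tfrac{2\pi}{5},\tfrac{4\pi}{5})$ and $(\tfrac{\pi}{5},\tfrac{3\pi}{5})$ produce only partial sums of denominator $5$, which $at_0$ (whose value $at_0/2\pi$ has denominator coprime to $5$) can never cancel modulo $2\pi$, while $(\tfrac{\pi}{4},\tfrac{3\pi}{4})$ and $(\tfrac{\pi}{6},\tfrac{5\pi}{6})$ give partial sums $\{\pm\tfrac{\pi}{2},\pm\pi\}$ and $\{\pm\tfrac{2\pi}{3},\pm\pi\}$, matched respectively by $at_0\in\{\tfrac{\pi}{2},\pi\}$ and $at_0\in\{\tfrac{2\pi}{3},\pi\}$. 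This yields the four mixed triples of the statement.

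The bulk of the work is Case (1), where $\theta_1,\theta_2,\theta_3\in\{2\pi,\pi,\tfrac{2\pi}{3},\tfrac{\pi}{2},\tfrac{\pi}{3}\}$. Measuring in the common unit $\tfrac{\pi}{6}$ turns these angles into the residues $12,6,4,3,2$ and the test into $\pm v_1\pm v_2\pm v_3\equiv 0\pmod{12}$ with $v_i\in\{12,6,4,3,2\}$. Since $12\equiv 0\pmod{12}$, a finite enumeration of the $\binom{7}{3}=35$ multisets, organised by the number of entries equal to $12$ (i.e. the number of identity blocks), shows that the survivors are exactly $\{12,12,12\}$, $\{12,6,6\}$, $\{12,4,4\}$, $\{12,3,3\}$, $\{12,2,2\}$, $\{6,4,2\}$, $\{6,3,3\}$, $\{4,4,4\}$, $\{4,2,2\}$, giving the nine remaining triples (for instance $\{4,2,2\}$ becomes $(\tfrac{\pi}{3},\tfrac{\pi}{3},-\tfrac{2\pi}{3})$ and $\{12,4,4\}$ becomes $(2\pi,-\tfrac{2\pi}{3},-\tfrac{4\pi}{3})$).

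Finally I would turn each surviving multiset into a representative of exact sum $0$, which is where I expect the only real subtlety. Whenever a signed sum equals a nonzero multiple of $2\pi$ I absorb it into one entry via $\theta\sim\theta-2\pi$; the only danger is collapsing a $2\pi$ block down to $0$, which would change the underlying flat Lie algebra, so for the multisets containing $12$ I subtract the excess from a genuinely rotating block (writing, e.g., $\tfrac{2\pi}{3}$ as $-\tfrac{4\pi}{3}$), and for $\{12,12,12\}$ I use $(2\pi,2\pi,-4\pi)$, keeping all three rotation angles nonzero. The fourteen triples thus obtained carry pairwise distinct multisets of rotation blocks, hence pairwise distinct characteristic polynomials of $\phi(t_0)$; their monodromies are therefore non-conjugate and the induced lattices non-isomorphic, consistently with their being distinct members of the $123$ almost abelian lattices classified above. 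Accordingly, the main obstacle is not a single hard step but the combination of the Case (1) enumeration with this nonvanishing bookkeeping: the congruence $12\equiv 0$ cannot by itself tell an identity block $2\pi$ from a trivial $0$ block, and it is exactly this distinction that keeps, for example, $\{12,4,4\}$ and $\{4,4,4\}$ as two different lattices.
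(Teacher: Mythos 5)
Your proposal takes essentially the same route as the paper: starting from Theorem \ref{valuest0}, using the equivalence $\theta\sim\pm\theta+2\pi\Z$ (which preserves the conjugacy class of $\phi(t_0)$, hence the lattice) to reduce the condition $at_0+bt_0+ct_0=0$ to deciding whether some signed sum $\pm at_0\pm bt_0\pm ct_0$ lies in $2\pi\Z$, and then enumerating; the paper dismisses this enumeration as ``a straightforward computation,'' which you carry out explicitly and correctly. Your case analysis --- parity/range bounds for Case 3, denominator-coprimality for Case 2, residues modulo $12$ in units of $\pi/6$ for Case 1 --- recovers exactly the fourteen triples of the statement, so the proposal is a correct, more detailed rendering of the paper's own argument.
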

\begin{proof} 
The characteristic polynomial $P_{\phi(t_0)}$ is given by \[P_{\phi(t_0)}=(x^2-2x\cos(at_0)+1)(x^2-2x\cos(bt_0)+1)(x^2-2x\cos(ct_0)+1).\] The values of $at_0, bt_0, ct_0$ such that $\phi(t_0)$ is similar to an integer matrix were obtained in Theorem \ref{valuest0}. We can change the values of $at_0, bt_0, ct_0$ by $\{\pm at_0\}+2\pi \Z$, $\{\pm bt_0 \}+2\pi \Z$ and $\{\pm ct_0\}+2\pi \Z$ so that we do not change the value of the respective cosines.  Moreover, we want $at_0+bt_0+ct_0=0$, so we have to verify, for the values obtained in Theorem \ref{valuest0} if \[0\in (\{\pm at_0\}+2\pi \Z)+(\{\pm bt_0 \}+2\pi \Z)+(\{\pm ct_0 \}+2\pi \Z).\] Equivalently, we have to check if some sum
$\{\pm a t_0\}+\{\pm bt_0\}+\{\pm ct_0\}$ is equal to $2\pi k$, for some $k\in \Z$. In fact, we have to check only if $\{\pm at_0\}+\{\pm bt_0\}+ ct_0=2\pi k$. This can be done by a straightforward computation and thus the values of the statement are obtained. \end{proof}

With the values of $at_0, bt_0, ct_0$ obtained, we list in the following table to which integer matrices (up to integral similarity) we can conjugate $\phi(t_0)$. For each one of these triples we obtain non-isomorphic lattices (as we saw before) and  therefore, we get 30 splittable flat solvmanifolds with a torsion-free $G_2$-structure. All of these examples have finite cyclic holonomy contained in $G_2$ which can be computed easily from Proposition  \ref{hol}.

\medskip

\tiny\begin{center} 
\begin{tabular}{|c|l|c|}
	\hline
	$(at_0,bt_0,ct_0)$ & Similar to & $\hol(\Gamma\backslash G_{a,b,c})$ \\
	
	\hline
	
	$(\frac{2\pi}{7},\frac{4\pi}{7},-\frac{6\pi}{7})$ & $\matriz{-1&0&0&0&1&0\\-1&0&1&0&0&0\\-1&0&0&0&0&0\\1&-1&0&0&0&0\\-1&0&0&0&0&-1\\1&0&0&1&0&0}$& $\Z_7$ \\
	
	\hline
	
	$(\pi,-\frac{\pi}{6},-\frac{5\pi}{6})$ & $\matriz{-1&-1&0&0\\1&1&1&-1\\1&0&1&0\\1&0&1&-1}\oplus (-\I_2)$ & $\Z_{12}$ \\
	
	\hline
	
	$(\frac{2\pi}{3},\frac{\pi}{6},-\frac{5\pi}{6})$ & $\matriz{0&0&0&1\\0&0&1&-1\\1&0&0&0\\0&-1&0&0}\oplus \matriz{0&-1\\1&-1}$, $\matriz{0&0&0&0&0&-1\\0&0&1&0&0&-1\\-1&0&0&0&1&0\\0&-1&0&0&0&1\\0&0&0&1&0&0\\0&0&0&0&1&-1}$ & $\Z_{12}$\\
	
	\hline 
	
	$(\pi,-\frac{\pi}{4},-\frac{3\pi}{4})$ & $\matriz{0&1&0&0\\0&0&-1&0\\0&0&0&1\\1&0&0&0}\oplus (-\I_2)$, $\matriz{-1&0&0&0&0&0\\0&0&-1&-1&0&1\\0&0&0&1&0&0\\0&0&0&0&-1&0\\0&-1&0&0&0&-1\\0&0&0&1&0&-1}$ & $\Z_8$ \\
	
	\hline 
	
	$(\frac{\pi}{2},-\frac{\pi}{4},\frac{3\pi}{4})$ & $\matriz{0&0&0&-1\\0&0&1&0\\-1&0&0&0\\0&-1&0&0}\oplus \matriz{0&-1\\1&0}$, $\matriz{-1&0&-1&0&-1&0\\0&0&1&0&0&0\\1&0&0&0&0&1\\0&-1&0&0&0&0\\1&1&1&1&1&-1\\1&0&1&1&1&0}$& $\Z_8$\\ &$\matriz{0&0&1&1&0&0\\0&0&-1&0&0&-1\\0&-1&0&0&1&0\\0&1&0&0&0&0\\0&0&-1&0&0&0\\1&1&0&0&0&0}$ & \\
	
	\hline
	
	$(2\pi,2\pi,-4\pi)$ & $\I_6$  & $\{e\}$\\
	
	\hline 
	
	$(2\pi,-\pi,-\pi)$ & $-\I_4\oplus \I_2$, $(-\I_3)\oplus(1)\oplus \matriz{0&-1\\-1&0}$, $-\I_2\oplus\matriz{0&0&-1&0\\0&0&0&-1\\-1&0&0&0\\0&-1&0&0}$ & $\Z_2$ \\
	
\end{tabular}
\end{center}
\tiny\begin{center} 
\begin{table}
\begin{tabular}{|c|l|c|}
	
	$(2\pi,-\frac{\pi}{2},-\frac{3\pi}{2})$ & $\matriz{0&-1&0&-1\\1&0&1&0\\0&0&0&1\\0&0&-1&0}\oplus \I_2$, $\matriz{1&0&0\\0&0&1\\0&-1&0}\oplus\matriz{1&0&0\\-1&0&1\\0&-1&0}$,&$\Z_4$ \\
	&$(1)\oplus\matriz{0&1\\-1&0}\oplus\matriz{1&0&0\\-1&0&1\\0&-1&0}$  ,
	$\matriz{0&0&0&0&-1&0\\0&0&0&0&0&-1\\0&1&0&0&0&1\\-1&0&0&0&-1&0\\0&0&0&1&1&0\\0&0&-1&0&0&1}$ & \\
	
	\hline
	
	$(2\pi, -\frac{\pi}{3},-\frac{5\pi}{3})$ & $\matriz{1&1\\-1&0}\oplus\matriz{0&-1\\1&1}\oplus \I_2$ & $\Z_6$ \\
	
	\hline
	
	$(2\pi,-\frac{2\pi}{3},-\frac{4\pi}{3})$ & $\matriz{-1&1\\-1&0}\oplus\matriz{-1&-1\\0&1}\oplus \I_2$, $(1)\oplus \matriz{-1&-1\\1&0}\oplus \matriz{0&0&1\\-1&0&0\\0&-1&0}$,& $\Z_3$\\& $\matriz{0&-1&0&0&0&0\\0&0&0&0&1&0\\0&0&0&1&0&0\\0&0&0&0&0&-1\\-1&0&0&0&0&0\\0&0&-1&0&0&0}$ & \\
	
	\hline
	
	$(\pi,-\frac{\pi}{2},-\frac{\pi}{2})$ & $\matriz{0&1&0&1\\-1&0&-1&0\\0&0&0&-1\\0&0&1&0}\oplus(-\I_2)$, $(-1)\oplus\matriz{0&-1\\1&0}\oplus\matriz{-1&0&0\\1&0&-1\\0&1&0}$,& $\Z_4$\\& $\matriz{0&0&0&0&1&0\\0&0&0&0&0&1\\0&-1&0&0&0&-1\\1&0&0&0&1&0\\0&0&0&-1&-1&0\\0&0&1&0&0&-1}$ &  \\
	
	$(\pi,-\frac{2\pi}{3},-\frac{\pi}{3})$ & $\matriz{1&-1\\1&0}\oplus \matriz{-1&-1\\1&0}\oplus (-\I_2)$, $(-\I_2)\oplus\matriz{0&-1&1&0\\1&0&0&0\\1&0&0&-1\\0&0&1&0}$,&$\Z_6$\\& $(-1)\oplus \matriz{-1&-1\\1&0}\oplus\matriz{0&0&-1\\1&0&0\\0&1&0}$,   $(-1)\oplus\matriz{0&-1&0&0&-1\\1&-1&0&0&0\\0&-1&0&0&0\\0&1&-1&0&0\\0&1&0&1&0}$&\\
	
	\hline
	
	$(\frac{\pi}{3},\frac{\pi}{3},-\frac{2\pi}{3})$ & $\matriz{1&1\\-1&0}\oplus\matriz{1&-1\\1&0}\oplus\matriz{0&-1\\1&-1}$, $\matriz{1&-1\\1&0}\oplus\matriz{0&0&1&0\\1&0&0&0\\0&0&0&1\\0&-1&-1&0}$ & $\Z_6$\\
	\hline
	$(\frac{2\pi}{3},\frac{2\pi}{3},-\frac{4\pi}{3})$ & $\matriz{0&0&0&0&0&1\\0&0&-1&0&0&0\\0&1&-1&0&0&0\\0&0&0&0&-1&0\\0&0&0&1&-1&0\\-1&0&0&0&0&-1}$ &$\Z_3$ \\
	\hline
\end{tabular}
\caption{Almost abelian 7-dimensional flat solvmanifolds with a torsion-free $G_2$-structure}
\end{table}
\end{center}
 
\normalsize
 
\subsection{Non-almost abelian solvmanifolds}

Let $\g_{a,b,c,d}=\R^2\ltimes_{\ad} \R^5$ be a flat non almost abelian Lie algebra and $G_{a,b,c,d}=\R^2 \ltimes_{\phi} \R^5$ its corresponding simply-connected Lie group, where $\R^2=\text{span}_\R\{x,y\}$ and \begin{align*} 
	\ad_x=(1)\oplus \matriz{0&-a\\a&0}\oplus \matriz{0&-b\\b&0},\quad \ad_y=(1)\oplus \matriz{0&-c\\c&0}\oplus \matriz{0&-d\\d&0},
\end{align*} where, $a^2+c^2\neq 0\neq b^2+d^2, ad-bc\neq 0$.

The Lie brackets are given by
\begin{align*}
	[e_1, e_4]=a e_5&, \quad [e_1, e_6]=b e_7, \quad [e_2, e_4]=c e_5, \quad [e_2, e_6]=d e_7,\\
	[e_1, e_5]=-a e_4&, \quad [e_1, e_7]=-b e_6, \quad [e_2, e_5]=-c e_4, \quad [e_2, e_7]=-d e_6.
\end{align*}
Therefore, the Chevalley-Eilenberg differential $\d\,:\alt^1 \g_{a,b,c,d}\to\alt^2\g_{a,b,c,d}$ is given by 
\begin{align}\label{dif2}
	\d e^4=ae^{15}+ce^{25},&\quad \d e^5=-ae^{14}-ce^{24},\\
	\d e^6=be^{17}+de^{27},&\quad \d e^7=-be^{16}-de^{26}. \nonumber
\end{align}

We want to study the existence of closed and coclosed $G_2$-structures in $\g_{a,b,c,d}$. We will prove that $\g_{a,b,c,d}$ does not admit any closed $G_2$-structure. The key lemma is the following one, proved in \cite{Fino}, where the following notation is used. Given a 7-dimensional real Lie algebra $\g$, every 3-form $\phi\in \alt^3\g^*$ on $\g$ gives rise to a symmetric bilinear map $b_\phi$ by setting $b_\phi:\g\times\g\to\alt^7\g^*\simeq \R$, \[(v,w)\mapsto \frac{1}{6} \iota_v \phi \wedge \iota_w \phi \wedge \phi.\]
\begin{lema}\cite{Fino}\label{Fino}
	A 7-dimensional oriented real Lie algebra $\g$ does not admit any closed $G_2$-structure if for every closed 3-form $\phi\in\alt^3 \g^*$ one of the following conditions hold for the map $b_\phi: \g\times\g\to\alt^7 \g^*\simeq \R:$
	\begin{enumerate}
		\item There exists $v\in\g\setminus\{0\}$ such that $b_\phi(v,v)=0$,
		
		\item There exist $v,w\in\g\setminus\{0\}$ such that $b_\phi(v,v)b_\phi(w,w)\leq 0$.
	\end{enumerate}
\end{lema}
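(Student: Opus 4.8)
The plan is to prove the contrapositive: assuming $\g$ admits a closed $G_2$-structure $\varphi$, I will exhibit a single closed 3-form, namely $\phi=\varphi$ itself, for which \emph{neither} condition (1) \emph{nor} condition (2) holds. The whole argument rests on one structural observation: for a positive 3-form the associated symmetric bilinear map $b_\varphi$ is \emph{definite}. Once this is in hand both alternatives fail automatically, since a definite form never vanishes on nonzero vectors and takes values of a single sign.

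To establish definiteness I would start from the defining identity \eqref{gphi}, which reads $g_\varphi(X,Y)\operatorname{vol}_\varphi=\tfrac{1}{6}\,\iota_X\varphi\wedge\iota_Y\varphi\wedge\varphi=b_\varphi(X,Y)$; thus $b_\varphi$ and $g_\varphi$ differ only by the factor $\operatorname{vol}_\varphi\in\alt^7\g^*$. Fixing the orientation of $\g$ gives the isomorphism $\alt^7\g^*\simeq\R$ under which $\operatorname{vol}_\varphi$ corresponds to a nonzero real number $c$ (nonzero because a positive form is nondegenerate), so that $b_\varphi=c\,g_\varphi$ as bilinear forms on $\g$. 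Since $\varphi$ is positive it lies in the $\GL(7,\R)$-orbit of the reference 3-form of \eqref{phi1}, on which one checks directly that the metric induced by \eqref{gphi} is the Euclidean one; hence $g_\varphi$ is positive definite, and therefore $b_\varphi=c\,g_\varphi$ is a definite symmetric bilinear form (positive definite if $c>0$, negative definite if $c<0$).

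With this in place the conclusion is immediate. For condition (1): if $v\neq 0$ then $b_\varphi(v,v)=c\,g_\varphi(v,v)\neq 0$, because $c\neq 0$ and $g_\varphi$ is positive definite, so no such $v$ exists. For condition (2): for any $v,w\neq 0$ one has $b_\varphi(v,v)\,b_\varphi(w,w)=c^2\,g_\varphi(v,v)\,g_\varphi(w,w)>0$, so the product is strictly positive and never $\leq 0$. Hence $\phi=\varphi$ violates both alternatives, contradicting the hypothesis that one of them holds for every closed 3-form; this proves the lemma.

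The only genuinely nontrivial input—and the step I expect to be the main obstacle—is the claim that positivity of $\varphi$ forces $g_\varphi$ to be positive definite, equivalently that formula \eqref{gphi} produces a bona fide Riemannian metric rather than a degenerate or indefinite one. This is exactly the well-known fact that the $\GL(7,\R)$-stabilizer of the model 3-form is the compact Lie group $G_2\subset\SO(7)$, so that the orbit of positive forms is parametrized by definite metrics; verifying it amounts to the $\GL(7,\R)$-equivariance of \eqref{gphi} together with the explicit computation on the reference form. Everything after this point is just elementary linear algebra of definite forms.
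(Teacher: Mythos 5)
Your proof is correct: the key observation that a positive 3-form has \emph{definite} associated bilinear map $b_\varphi$ --- immediate from the identity \eqref{gphi}, since $b_\varphi(X,Y)=g_\varphi(X,Y)\operatorname{vol}_\varphi$ with $g_\varphi$ positive definite and $\operatorname{vol}_\varphi$ a nonzero element of $\alt^7\g^*$ --- shows that neither alternative (1) nor (2) can hold for $\phi=\varphi$, which is exactly the contrapositive of the lemma. The paper itself offers no proof (the lemma is quoted from \cite{Fino}), and your argument is precisely the standard one underlying that citation, so the approaches coincide.
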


\begin{prop}
	The Lie algebra $\g_{a,b,c,d}$ does not admit closed $G_2$-structures.
\end{prop}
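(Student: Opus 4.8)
The plan is to apply Lemma \ref{Fino}: it suffices to prove that for \emph{every} closed $3$-form $\phi\in\alt^3\g_{a,b,c,d}^*$ the symmetric form $b_\phi$ fails to be definite, by producing either a $b_\phi$-null vector or two vectors on which $b_\phi$ takes values of opposite sign. The whole argument will be reduced to the single identity
\[ b_\phi(e_4,e_4)=-\,b_\phi(e_5,e_5)\qquad\text{for every closed }\phi, \]
from which the conclusion is immediate: if the common value is nonzero then $b_\phi(e_4,e_4)\,b_\phi(e_5,e_5)=-b_\phi(e_5,e_5)^2<0$ and condition (2) holds, while if it vanishes then $e_4$ is a $b_\phi$-null vector and condition (1) holds.

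To compute the closed $3$-forms I would first note from \eqref{dif2} that $e^1,e^2,e^3$ are closed and that $\d$ never produces a factor $e^3$. Writing $\phi=e^3\wedge\psi+\rho$ with $\psi\in\alt^2 U$, $\rho\in\alt^3 U$ and $U=\langle e^1,e^2,e^4,e^5,e^6,e^7\rangle$, closedness of $\phi$ is therefore equivalent to $\d\psi=0$ and $\d\rho=0$ computed inside $U$. Solving these two linear systems is where the hypothesis $ad-bc\neq0$ enters decisively: it forces the mixed-block components to vanish, namely $\psi_{46}=\psi_{47}=\psi_{56}=\psi_{57}=0$ and $\rho_{456}=\rho_{457}=\rho_{467}=\rho_{567}=0$, so that the two rotation planes $\langle e_4,e_5\rangle$ and $\langle e_6,e_7\rangle$ are coupled in $\phi$ only through $e^1,e^2$; the remaining coefficients satisfy explicit linear relations (for instance $c\psi_{14}=a\psi_{24}$, together with four relations among the eight coefficients $\rho_{1pq},\rho_{2pq}$ with $p\in\{4,5\}$, $q\in\{6,7\}$).

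Next I would compute $b_\phi(e_4,e_4)=\tfrac16\,\iota_{e_4}\phi\wedge\iota_{e_4}\phi\wedge\phi$, and likewise for $e_5$. Since $\iota_{e_4}\phi$ carries no $e^4$ while the only $e^4$ in $\phi$ sits in $e^4\wedge\iota_{e_4}\phi$, the volume part collapses to $e^4\wedge(\iota_{e_4}\phi)^3$, i.e.\ to a Pfaffian of the coefficient matrix of $\iota_{e_4}\phi$. The vanishings established above kill all but two matchings in that Pfaffian, yielding $b_\phi(e_4,e_4)=\psi_{45}\,(\rho_{147}\rho_{246}-\rho_{146}\rho_{247})$ and symmetrically $b_\phi(e_5,e_5)=\psi_{45}\,(\rho_{157}\rho_{256}-\rho_{156}\rho_{257})$. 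It then remains to show the two $2\times2$ determinants are opposite; arranging the four relevant pairs of $\rho$-coefficients as vectors $p,q,r,s\in\R^2$, the closedness relations read $r=Nq$ and $s=-Np$ for the reflection $N=MFM^{-1}$, where $M=\left(\begin{smallmatrix} a & b\\ c & d\end{smallmatrix}\right)$ (invertible because $ad-bc\neq0$) and $F$ is the coordinate swap. Since $\det N=-1$ one obtains $\det(s,r)=-\det(q,p)$, which is precisely $b_\phi(e_5,e_5)=-b_\phi(e_4,e_4)$, holding for all $a,b,c,d$ with $ad-bc\neq0$ and requiring no case distinction.

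The main obstacle is the bookkeeping in the two middle steps rather than any conceptual difficulty: one must carry out the full determination of closed $3$-forms and the interior-product/Pfaffian evaluation without sign errors. It is worth stressing that no single fixed direction works a priori -- for example the naive candidate $e_3$ need not be null, since one computes $b_\phi(e_3,e_3)=\psi_{12}\psi_{45}\psi_{67}$, which can be positive (the restriction $\psi$ to the six-dimensional quotient can be symplectic, e.g.\ $e^{12}+e^{45}+e^{67}$). What makes the pair $(e_4,e_5)$ decisive is exactly the vanishing of the cross-block terms coming from $ad-bc\neq0$, which prevents $\phi$ from realizing the generic $G_2$ pairing between the two rotation planes and pins $b_\phi(e_4,e_4)$ and $b_\phi(e_5,e_5)$ to be exact opposites.
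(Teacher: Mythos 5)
Your proposal is correct and follows essentially the same route as the paper's proof: the same Lemma \ref{Fino} applied to the same pair $v=e_4$, $w=e_5$, the same consequences of closedness (vanishing of the mixed coefficients $\psi_{46},\psi_{47},\psi_{56},\psi_{57}$ and $\rho_{456},\rho_{457},\rho_{467},\rho_{567}$, plus the four linear relations expressing $\rho_{156},\rho_{157},\rho_{256},\rho_{257}$ in terms of $\rho_{146},\rho_{147},\rho_{246},\rho_{247}$), and the same key identity $b_\phi(e_4,e_4)=-b_\phi(e_5,e_5)$, which the paper verifies by direct substitution while you obtain it more structurally from $r=Nq$, $s=-Np$ with $\det N=\det(MFM^{-1})=-1$ --- a cleaner way to see the sign, and one that I checked reproduces the paper's explicit formulas. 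The only slip is in your closing aside: with the mixed terms killed, $b_\phi(e_3,e_3)=\tfrac{1}{6}\,e^3\wedge\psi^3$ is a five-term Pfaffian (the matchings $\{14\}\{25\}\{67\}$, $\{15\}\{24\}\{67\}$, $\{16\}\{27\}\{45\}$, $\{17\}\{26\}\{45\}$ also survive), not just $\psi_{12}\psi_{45}\psi_{67}$, but this is peripheral and your point that $e_3$ need not be $b_\phi$-null remains true, e.g.\ for $\psi=e^{12}+e^{45}+e^{67}$.
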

\begin{proof}
	Let $\phi=\sum_{i<j<k} a_{ijk} e^{ijk}$ be a generic $3$-form. For $\phi$ to be closed we have
	\begin{align*} 
	&0=-(a a_{235}-c a_{135}) e^{1234}+(a a_{234}-c a_{134}) e^{1235} \\
&+(d a_{137}-b a_{237}) e^{1236}-(d a_{136}-b a_{236}) e^{1237} \\
&	 - (a a_{256}-d a_{147}-c a_{156}+b a_{247}) e^{1246}- (a a_{257}+d a_{146}-c a_{157}-b a_{246})e^{1247}\\
	&+ (a a_{246}-c a_{146}+d a_{157}-b a_{257}) e^{1256}
	+(a a_{247}-c a_{147}-d a_{156}+b a_{256}) e^{1257}\\
	&-(a a_{356}+b a_{347}) e^{1346}
	-(a a_{357}-b a_{346})e^{1347}+(a a_{346}-b a_{357}) e^{1356}
	+(a a_{347}+b a_{356}) e^{1357}\\&-b a_{457} e^{1456}+ b a_{456} e^{1457}
	- a a_{567} e^{1467}+ a a_{467} e^{1567}- (d a_{347}+c a_{356}) e^{2346}\\&
	+ (d a_{346}-c a_{357})e^{2347}+(c a_{346}-d a_{357}) e^{2356}
	+ (c a_{347}+d a_{356})e^{2357}\\&- d a_{457} e^{2456}+ d a_{456} e^{2457}- c a_{567} e^{2467}
	+ c a_{467} e^{2567}.
	\end{align*} 
Since $a^2+c^2\neq 0$ and $b^2+d^2\neq 0$, we have $a_{467}=a_{567}=0$ and $a_{456}=a_{457}=0$ respectively. Looking at the last eight pairs of terms we deduce that
\[ \begin{cases}
	a a_{347}=-b a_{356} \\
	a a_{356}=-b a_{347} \\
	a a_{357}=b a_{346} \\
	a a_{346}= b a_{357}
\end{cases}\quad\text{and}\quad \begin{cases}
d a_{347}=-c a_{356}\\
d a_{346}= c a_{357}\\
c a_{346}= d a_{357}\\
c a_{347}= -d a_{356}
\end{cases}. \]
From here we have
\[ a^2 a_{346}=ab a_{357}= b^2 a_{346} \]
\[ a^2 a_{347}= -ab a_{356} = b^2 a_{347}  \]
\[ c^2 a_{346}= cd a_{357}= d^2 a_{346}\]
\[ c^2 a_{347}= -cd a_{356}= d^2 a_{347}\]
If $a_{346}\neq 0\neq a_{347}$ then $a^2=b^2$ and $c^2=d^2$. Thus, $b=\pm a$ and $c=\pm d$, neither of them being zero. The condition $ad-bc\neq 0$ rules out the cases $b=a, c=d$ and $b=-a, c=-d$. In the other two cases, looking at the previous equations it follows that $a=b=c=d=0$, which contradicts $ad-bc\neq 0$. Therefore, $a_{346}=a_{347}=a_{356}=a_{357}=0$.

Furthermore, from the terms having 4 summands we see that 
\[a_{157}= \frac{(b^2-a^2) a_{246}+(ac-bd)a_{146} }{ad-bc},\quad a_{257}=\frac{(-ac+bd)a_{246}+(c^2-d^2)a_{146}}{ad-bc}, \]
\[a_{156}=\frac{(-ac+bd)a_{147}+(a^2-b^2)a_{247}}{ad-bc}, \quad a_{256}=\frac{(ac-bd)a_{247}+(d^2-c^2)a_{147}}{ad-bc}.\]

Using the values we have just found in the expresion for $\varphi$ we compute now \[\iota_{e_4} \phi \wedge \iota_{e_4} \phi \wedge \phi =-6(a_{146}a_{345}a_{247}-a_{147}a_{345}a_{246}) e^{1\ldots 7},\] 
\[\iota_{e_5}\phi \wedge \iota_{e_5}\phi \wedge \phi=6(a_{146}a_{247}a_{345}-a_{147}a_{246}a_{345})e^{1\ldots 7}.\]
Using (ii) of Lemma \ref{Fino} for $v=e_4$ and $w=e_5$, we conclude that $\g_{a,b,c,d}$ does not admit any closed $G_2$-structure, for any choice of values $a,b,c,d$.
\end{proof}

\medskip

Although $\g_{a,b,c,d}$ does not admit closed $G_2$-structures, it does admit coclosed $G_2$-structures for some values of $a,b,c,d$.

\begin{prop}\label{coclosed}
Let $\varphi\in \alt^3 \g_{a,b,c,d}^*$ be given as in \eqref{phi}. Then $\varphi$ is coclosed if and only if $c=-d$.
\end{prop}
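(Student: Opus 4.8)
The plan is to compute $\d\star_\varphi\varphi$ directly from the Chevalley--Eilenberg differential \eqref{dif2} and read off the condition under which it vanishes. First I would write down the Hodge dual $\star_\varphi\varphi$ with respect to the orthonormal coframe $\{e^i\}_{i=1}^7$; since $\{e_1,\ldots,e_7\}$ is orthonormal for $g_\varphi$, this is the same 4-form that appeared in the almost abelian case, namely
\[\star_\varphi\varphi=-e^{1247}-e^{1256}-e^{1346}+e^{1357}+e^{2345}+e^{2367}+e^{4567}.\]
The orthonormality is the crucial simplification: it means I do not need to recompute the induced metric, and the dual is fixed once and for all.

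Next I would apply $\d$ term by term using \eqref{dif2}, remembering that $\d e^1=\d e^2=\d e^3=0$ while $\d e^4,\d e^5,\d e^6,\d e^7$ are the nonzero brackets involving $a,b,c,d$. The key computational point is that a wedge $\d e^i\wedge(\text{rest})$ survives only when the extra index produced by $\d e^i$ is not already present in the remaining factors; most terms will annihilate by repeated indices, so in practice only a handful of monomials contribute. I expect the surviving 5-forms to organize themselves so that the coefficients group into combinations of $a,b,c,d$. The assertion to be proved is that the whole expression collapses to a multiple of $(c+d)$ times some nonzero 5-form (or a sum of such terms, each carrying the factor $c+d$), so that $\d\star_\varphi\varphi=0$ is equivalent to $c=-d$.

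Concretely, I would differentiate each of the seven terms: for instance $\d(e^{4567})$ uses $\d e^4,\d e^5,\d e^6,\d e^7$, and the terms $e^{1247},e^{1256},e^{1346},e^{1357},e^{2345},e^{2367}$ each contribute through their $e^4,e^5,e^6,e^7$ factors. I anticipate that the coefficient of every nonvanishing monomial will be proportional to $c+d$ after collecting (this is consistent with the earlier almost abelian computation, where the analogous coclosedness held identically because $\ad$ was skew-symmetric; here the off-diagonal coupling between the $x$- and $y$-blocks forces the constraint). The forward direction ($c=-d\Rightarrow\d\star_\varphi\varphi=0$) is then immediate by substitution, and the converse follows because the monomial in question is a nonzero basis element of $\alt^5\g^*$, so its coefficient $c+d$ must vanish.

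The main obstacle is purely bookkeeping: keeping track of signs from the wedge reorderings and ensuring I have collected all contributions to each surviving 5-form, since a cancellation or a sign error would spuriously change the condition. To guard against this I would fix a total order on $\{1,\ldots,7\}$, reduce every differentiated monomial to that order recording the sign, and tabulate the coefficient of each distinct $e^{ijklm}$; the claim is verified once every such coefficient is seen to be a scalar multiple of $c+d$ (and at least one is a nonzero multiple). I do not expect any conceptual difficulty beyond this, since $\star_\varphi\varphi$ is already known and $\d$ is linear, so the computation is finite and mechanical.
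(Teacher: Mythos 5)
Your proposal is correct and follows essentially the same route as the paper: the paper's proof is precisely the direct computation of $\d\star_\varphi\varphi$ from \eqref{dif2} using the standard expression for $\star_\varphi\varphi$ in the orthonormal coframe, yielding $\d\star_\varphi\varphi=(d+c)\,e^{12347}+(d+c)\,e^{12356}$, which vanishes exactly when $c=-d$. Carrying out the bookkeeping you describe indeed produces this expression (only the terms $-e^{1346}$ and $+e^{1357}$ contribute; the other five are annihilated by repeated indices), so your anticipated structure of the answer is confirmed.
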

\begin{proof}
	Using \eqref{dif2} we compute 
	\[\d\star_\varphi\varphi=(d+c) e^{12347}+ (d+c) e^{12356}.\]
\end{proof}

 Since the 45 non-almost abelian 7-dimensional splittable flat solvmanifolds appearing in Table 1 satisfy $d=-c$, all these solvmanifolds admit a coclosed $G_2$-structure.

\medskip

\subsection{Divergence-free examples}
Finally, we will show that all 45 non almost abelian 7-dimensional flat solvmanifolds we have obtained admit a divergence-free $G_2$-structure. First, we give the formulas of the torsion forms $\tau_1,\ldots,\tau_4$ for the Lie algebra $\g_{a,b,c,d}$. 
\begin{prop}\label{torsionforms}
	Let $\varphi\in \Lambda^3 \g_{a,b,c,d}^*$ defined by $\varphi=e^{123}+e^{145}+e^{167}+e^{246}-e^{257}-e^{347}-e^{356}$. Then, the torsion forms of $\varphi$ are given by
	\begin{align*} 
		\tau_0&=-\frac{4}{7}(b+a),\\
		\tau_1&=-\frac{1}{6}(d+c) e^3,\\
		\tau_2&=-(d+c)(e^{47}+e^{56}),\\
		\tau_3&=\frac{1}{7}(b+a)(3e^{257}-3e^{246}+3e^{347}+3e^{356}+4e^{123}+4e^{145}+4e^{167})\\&\quad+\frac{1}{2}(d+c)(e^{146}-e^{157}+e^{245}+e^{267}).
	\end{align*}
\end{prop}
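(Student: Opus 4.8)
The plan is to compute the four torsion forms $\tau_0,\tau_1,\tau_2,\tau_3$ directly from the explicit identities \eqref{tau3}, using the Chevalley--Eilenberg differential \eqref{dif2} on $\g_{a,b,c,d}$. Since $\{e_1,\ldots,e_7\}$ is orthonormal for the metric $g_\varphi$ induced by \eqref{phi}, the Hodge star $\star_\varphi$ acts on monomials $e^{ijk}$ by sending them (up to sign) to the complementary monomial $e^{lmnp}$, with the sign fixed by requiring $e^{ijk}\wedge\star_\varphi e^{ijk}=\operatorname{vol}_\varphi=e^{1234567}$. So my first step is to record the value of $\star_\varphi$ on each of the relevant basis $3$- and $4$-forms, together with $\star_\varphi\varphi$ (which I can take from the computation already done in the proof of Proposition~\ref{coclosed}, namely the expression for $\star_\varphi\varphi$ appearing in the almost-abelian case suitably re-read here).

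Next I would assemble the exterior derivatives. From \eqref{dif2} I compute $\d\varphi$ and $\d\star_\varphi\varphi$ as explicit linear combinations of $4$- and $5$-forms with coefficients that are $\R$-linear in $a,b,c,d$; I expect $\d\varphi$ to be governed by the combination $(a+b)$ (the analogue of $a+b+c$ in the almost-abelian case, with the trivial block $(1)$ contributing nothing to $\d$) and $\d\star_\varphi\varphi$ to be governed by $(c+d)$, matching the coclosedness criterion $c=-d$ already isolated in Proposition~\ref{coclosed}. Plugging these into the formulas \eqref{tau3}: $\tau_0=\tfrac17\star_\varphi(\d\varphi\wedge\varphi)$ is a scalar, and one checks $\d\varphi\wedge\varphi$ is a multiple of the volume form, yielding the claimed $-\tfrac47(a+b)$; $\tau_1=-\tfrac1{12}\star_\varphi(\star_\varphi\d\varphi\wedge\varphi)$ should collapse to a multiple of $e^3$, reflecting that the only ``rotation-invariant'' covector left after the star-wedge-star contraction points along the central direction, giving $-\tfrac16(c+d)e^3$.

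The remaining two forms are the laborious ones. For $\tau_2=-\star_\varphi\d\star_\varphi\varphi+4\star_\varphi(\tau_1\wedge\d\star_\varphi\varphi)$ I substitute the already-computed $\d\star_\varphi\varphi$ and $\tau_1$; the correction term $\star_\varphi(\tau_1\wedge\d\star_\varphi\varphi)$ must be checked, but since both factors are proportional to $(c+d)$ their product is proportional to $(c+d)^2$, and I expect it to vanish or recombine so that the net $\tau_2$ is linear in $(c+d)$, landing on $-(c+d)(e^{47}+e^{56})\in\Omega^2_{14}$. Finally $\tau_3=\star_\varphi\d\varphi-\tau_0\varphi-3\star_\varphi(\tau_1\wedge\varphi)$ combines the $(a+b)$-part coming from $\star_\varphi\d\varphi-\tau_0\varphi$ with the $(c+d)$-part coming from $-3\star_\varphi(\tau_1\wedge\varphi)$; collecting monomials should reproduce the two displayed groups of terms, with coefficients $\tfrac17(a+b)$ and $\tfrac12(c+d)$ respectively.

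The main obstacle is purely computational bookkeeping: there is no conceptual difficulty, but the Hodge-star signs on the many monomials in $\tau_3$, and the verification that the quadratic cross-term in $\tau_2$ does not spoil linearity, are error-prone. I would organize the calculation by first tabulating $\star_\varphi$ on all $e^{ijk}$ and $e^{ijkl}$ that actually occur, then treating the ``$(a+b)$-sector'' (driven by $\d\varphi$) and the ``$(c+d)$-sector'' (driven by $\d\star_\varphi\varphi$) separately, and only at the end summing them, using that $\tau_0,\tau_3$ come from $\d\varphi$ while $\tau_1,\tau_2$ come from $\d\star_\varphi\varphi$ (with $\tau_3$ receiving a mixed contribution through the $\tau_1\wedge\varphi$ term). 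As a sanity check I would confirm consistency with Proposition~\ref{coclosed}: setting $c=-d$ must force $\tau_1=\tau_2=0$ and leave $\tau_3$ with only its $(a+b)$-part, which is exactly what the stated formulas give.
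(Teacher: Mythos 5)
Your method is the paper's method: compute all four torsion forms directly from \eqref{tau3} and \eqref{dif2}, with a table of Hodge stars for the orthonormal basis $\{e^i\}$, and the final formulas you assert agree with the proposition. However, your organizing principle contains a genuine error that would derail the computation if followed as written. You claim that $\d\varphi$ is ``governed by $(a+b)$'' and that ``$\tau_0,\tau_3$ come from $\d\varphi$ while $\tau_1,\tau_2$ come from $\d\star_\varphi\varphi$''. Both claims are false. Computing from \eqref{dif2} one finds
\[
\d\varphi=(a+b)\bigl(e^{1247}+e^{1256}+e^{1346}-e^{1357}\bigr)+(c+d)\bigl(e^{2346}-e^{2357}\bigr),
\]
so $\d\varphi$ has a nonzero $(c+d)$-component whenever $c\neq-d$ (the paper's displayed $\d\varphi$ carries a sign typo, $(d-c)$, but its subsequent expression $\star_\varphi\d\varphi=(a+b)(e^{257}-e^{246}+e^{347}+e^{356})+(c+d)(e^{146}-e^{157})$ is consistent with $(c+d)$). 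Moreover, in \eqref{tau3} the $1$-form $\tau_1$ is extracted from $\d\varphi$, not from $\d\star_\varphi\varphi$: the $(a+b)$-component of $\star_\varphi\d\varphi$ wedges to zero against $\varphi$, and it is precisely the $(c+d)$-component above that survives, giving $\star_\varphi\d\varphi\wedge\varphi=2(c+d)e^{124567}$ and hence $\tau_1=-\tfrac16(c+d)e^3$. Under your sector split ($\d\varphi$ purely $(a+b)$, $\tau_1$ sourced by $\d\star_\varphi\varphi$) you would instead obtain $\tau_1$ proportional to $(a+b)$ --- inconsistent with the value you yourself state.

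The error propagates to $\tau_3$: its $(c+d)$-part, $\tfrac12(c+d)(e^{146}-e^{157}+e^{245}+e^{267})$, is the sum of the contribution $(c+d)(e^{146}-e^{157})$ coming from $\star_\varphi\d\varphi$ and the contribution from $-3\star_\varphi(\tau_1\wedge\varphi)$; discarding the $(c+d)$-component of $\d\varphi$ kills both, and your $\tau_3$ would be missing its entire second line. The parts of your plan concerning $\tau_0$ and $\tau_2$ are sound: the $(c+d)$-component of $\d\varphi$ wedges to zero against $\varphi$ (neither $e^{157}$ nor $e^{146}$ occurs in $\varphi$), so $\tau_0=-\tfrac47(a+b)$ is unaffected, and the quadratic correction $\star_\varphi(\tau_1\wedge\d\star_\varphi\varphi)$ vanishes identically --- not merely ``recombines'' --- because $\tau_1$ is a multiple of $e^3$ and both monomials of $\d\star_\varphi\varphi=(c+d)(e^{12347}+e^{12356})$ already contain $e^3$; this is exactly how the paper's proof handles $\tau_2$. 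The fix is simple: compute $\d\varphi$ honestly with both of its components and carry the $(c+d)$-terms through the formulas for $\tau_1$ and $\tau_3$; the two ``sectors'' are not separated according to which of $\d\varphi$, $\d\star_\varphi\varphi$ drives them.
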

\begin{proof}
	We compute $\tau_0,\ldots,\tau_3$ using equations \eqref{tau3}. 

	Since $\tau_0=\frac{1}{7} \star_\varphi (\d\varphi \wedge \varphi)$, we compute 
	\[ \d \varphi=(b+a)(e^{1247}+e^{1256}+e^{1346}-e^{1357})+(d-c)(e^{2346}-e^{2357}),\quad  \d \varphi\wedge \varphi= -4(b+a) e^{1\ldots 7}.\]
	Therefore, \[\tau_0=-\frac{4}{7}(b+a).\]
	
	Now, for $\tau_1=-\frac{1}{12} \star_\varphi ( \star_\varphi  \d\varphi \wedge \varphi)$, we compute 
	\[ \star_\varphi\d\varphi=(b+a)(e^{257}-e^{246}+e^{347}+e^{356})+(d+c) (e^{146}-e^{157}),\quad\star_\varphi\d\varphi \wedge \varphi=2(d+c) e^{124567}.\]
	Therefore, \[\tau_1=-\frac{1}{6}(d+c) e^3.\]
	
	We know that $\tau_2=- \star_\varphi \d\star_\varphi \varphi+ 4 \star_\varphi (\tau_1 \wedge \d\star_\varphi \varphi)$. It follows from Proposition \ref{coclosed} and the expression of $\tau_1$ that 
	\[ \tau_1 \wedge \d\star_\varphi\varphi=0.\]
	Therefore, \[ \tau_2=-\star_\varphi\d\star_\varphi\varphi=-(d+c)(e^{47}+e^{56}).\]
	
	Finally, for $\tau_3= \star_\varphi \d\varphi-\tau_0 \varphi-3 \star_\varphi (\tau_1\wedge \varphi)$, we compute 
	\[\tau_1 \wedge \varphi=\frac{1}{6}(d+c)(e^{1345}+e^{1367}+e^{2346}-e^{2357}),\quad\star_\varphi(\tau_1 \wedge \varphi)=\frac{1}{6}(d+c)(e^{146}-e^{157}-e^{245}-e^{267}).\] Therefore,
	\begin{align*}
		\tau_3&=(b+a)(e^{257}-e^{246}+e^{347}+e^{356})+(d+c)(e^{146}-e^{157})\\
		&\quad +\frac{4}{7}(b+a)(e^{123}+e^{145}+e^{167}+e^{246}-e^{257}-e^{347}-e^{356})  \\
		&\quad -\frac{1}{2}(d+c)(e^{146}-e^{157}-e^{245}-e^{267})\\
		&=\frac{1}{7}(b+a)(3e^{257}-3e^{246}+3e^{347}+3e^{356}+4e^{123}+4e^{145}+4e^{167})\\&\quad+\frac{1}{2}(d+c)(e^{146}-e^{157}+e^{245}+e^{267})
	\end{align*}
\end{proof}
Next, we recall the \textit{divergence} of $T_\varphi$. It is defined as the vector field $\operatorname{div} T_\varphi$ given by
\begin{equation}\label{div}
	g_\varphi(\div T_\varphi, E_j)=\sum_{i=1}^7 (\nabla_{E_i} T_\varphi) (E_i,E_j),
\end{equation} where $\{E_i\}_{i=1}^7$ is an orthonormal local frame respect to the induced metric $g_\varphi$.

	In the Lie algebra setting, given that the basis $\{e_1,\ldots,e_7\}$ of $\g_{a,b,c,d}$ is an orthonormal basis for $\la \cdot,\cdot\ra_\varphi$, equation \eqref{div} takes the following form: \begin{equation}\label{div2} 
	\la \div(T_{\varphi}),e_j\ra_{\varphi}=-\sum_{i=1}^7  T_{\varphi} (\nabla_{e_i} e_i, e_j)- \sum_{i=1}^7 T_{\varphi}(e_i, \nabla_{e_i} e_j),
\end{equation} 

\begin{teo}
		Let $\varphi\in \Lambda^3 \g_{a,b,c,d}^*$ defined by $\varphi=e^{123}+e^{145}+e^{167}+e^{246}-e^{257}-e^{347}-e^{356}$. Then, for any choice of values $(a,b,c,d)$ we have $\operatorname{div} T_\varphi=0$, i.e., $\varphi$ is divergence-free.
\end{teo}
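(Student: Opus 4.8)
The plan is to evaluate the divergence directly through formula \eqref{div2}, whose two inputs are the Levi-Civita connection $\nabla$ of the flat metric $g_\varphi$ and the components of the full torsion tensor $T_\varphi$ read off from \eqref{torsion} and Proposition \ref{torsionforms}. First I would compute $\nabla$ in the orthonormal basis $\{e_1,\dots,e_7\}$ via the Koszul formula (equivalently, via Milnor's description of the connection of a flat Lie group). Since every nonzero bracket pairs a generator $e_1,e_2\in\b$ with an element of $\z(\g)\oplus[\g,\g]$, a short computation gives $\nabla_{e_i}e_j=[e_i,e_j]=\ad_{e_i}e_j$ when $i\in\{1,2\}$, and $\nabla_{e_i}e_j=0$ when $i\in\{3,\dots,7\}$. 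In particular $\nabla_{e_i}e_i=0$ for every $i$ (by unimodularity the trace of $\ad$ vanishes), so the first sum in \eqref{div2} vanishes identically and $\langle\div T_\varphi,e_j\rangle=-\sum_{i}T_\varphi(e_i,\nabla_{e_i}e_j)$.

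Next I would determine which indices can contribute. The term $\nabla_{e_i}e_j$ is nonzero only for $i\in\{1,2\}$ and $j\in\{4,5,6,7\}$, and then $\nabla_{e_i}e_j\in\operatorname{span}\{e_4,e_5,e_6,e_7\}=[\g,\g]$. Hence $\langle\div T_\varphi,e_j\rangle=0$ automatically for $j\in\{1,2,3\}$, and for $j\in\{4,5,6,7\}$ the right-hand side becomes a linear combination, with coefficients among $a,b,c,d$, of the ``mixed'' components $T_\varphi(e_1,e_k)$ and $T_\varphi(e_2,e_k)$ with $k\in\{4,5,6,7\}$. Thus the whole statement reduces to showing that these mixed $\b\times[\g,\g]$ components of $T_\varphi$ vanish.

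To finish I would invoke \eqref{torsion} with the torsion forms of Proposition \ref{torsionforms}. The scalar term $\tfrac{\tau_0}{4}g_\varphi$ is diagonal; using $\star_\varphi(\tau_1\wedge\star_\varphi\varphi)=\iota_{\tau_1^\sharp}\varphi$ with $\tau_1\propto e^3$, the $\tau_1$-term is a $2$-form supported on the pairs $\{(1,2),(4,7),(5,6)\}$, and $\tau_2\propto e^{47}+e^{56}$ is supported on $\{(4,7),(5,6)\}$; none of these meet a mixed pair, so each mixed component equals $-\tfrac14\jmath(\tau_3)(e_i,e_k)$. The essential computation is therefore to evaluate $\jmath(\tau_3)(e_i,e_k)=\star_\varphi(\iota_{e_i}\varphi\wedge\iota_{e_k}\varphi\wedge\tau_3)$ on the eight mixed pairs. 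For this I would list $\iota_{e_1}\varphi,\iota_{e_2}\varphi,\iota_{e_4}\varphi,\dots,\iota_{e_7}\varphi$, expand each $4$-form $\iota_{e_i}\varphi\wedge\iota_{e_k}\varphi$, and observe that every monomial arising there has complementary index triple lying outside the support $\{123,145,167,246,257,347,356,146,157,245,267\}$ of $\tau_3$; consequently each product with $\tau_3$ has vanishing top-degree part and $\jmath(\tau_3)(e_i,e_k)=0$. This index bookkeeping is the only genuine obstacle, and it is exactly what forces every mixed component to vanish, yielding $\div T_\varphi=0$.

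As a faster alternative I would instead compute $\nabla_{e_i}\varphi$ directly: since $\nabla_{e_i}\varphi=0$ for $i\ge 3$ while $\nabla_{e_1}\varphi=-(a+b)\,\iota_{e_1}\star_\varphi\varphi$ and $\nabla_{e_2}\varphi=-(c+d)\,\iota_{e_1}\star_\varphi\varphi$, the relation $\nabla_X\varphi=\iota_{T_\varphi(X)}\star_\varphi\varphi$ shows that the torsion endomorphism sends the whole frame into $\R e_1\subset\b$. In particular it has no $[\g,\g]$-component when evaluated on $e_1$ or $e_2$, so the mixed components needed above vanish and the same conclusion follows at once.
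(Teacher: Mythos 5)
Your primary argument is the same as the paper's proof: the Koszul formula gives $\nabla_{e_i}e_i=0$ and $\nabla_{e_i}e_j=\ad_{e_i}e_j$ nonzero only for $i\in\{1,2\}$, $j\in\{4,\dots,7\}$, so \eqref{div2} reduces the theorem to the vanishing of the mixed components $T_\varphi(e_1,e_k)$, $T_\varphi(e_2,e_k)$, $4\leq k\leq 7$, which is then checked term by term in \eqref{torsion} using Proposition \ref{torsionforms}. The only cosmetic difference is in the $\jmath(\tau_3)$ step: you expand the $4$-forms $\iota_{e_i}\varphi\wedge\iota_{e_k}\varphi$ and compare supports against $\tau_3$, while the paper expands the $5$-forms $\iota_{e_i}\varphi\wedge\tau_3$ ($i=1,2$) and compares against $\iota_{e_k}\varphi$ --- the same index bookkeeping in a different order.

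Your closing alternative, however, is a genuinely different and leaner route, and it is correct. Indeed $\nabla_{e_i}\varphi=0$ for $i\geq 3$ (since $\nabla_{e_i}=0$ as an operator there), and a direct computation gives $\nabla_{e_1}\varphi=(a+b)(e^{247}+e^{256}+e^{346}-e^{357})=-(a+b)\,\iota_{e_1}\star_\varphi\varphi$ and likewise $\nabla_{e_2}\varphi=-(c+d)\,\iota_{e_1}\star_\varphi\varphi$; the relation $\nabla_X\varphi=\iota_{T_\varphi(X)}\star_\varphi\varphi$ stated in the paper's preliminaries, together with the injectivity of $v\mapsto\iota_v\star_\varphi\varphi$, then yields $T_\varphi(e_1)=-(a+b)e_1$, $T_\varphi(e_2)=-(c+d)e_1$, and $T_\varphi(e_i)=0$ for $i\geq 3$. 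All mixed components vanish at once, with no need for Proposition \ref{torsionforms}, for \eqref{torsion}, or for any Hodge-star/$\jmath$ computation, and as a bonus you get the full torsion endomorphism explicitly. What the paper's longer route buys instead is the explicit list of torsion forms, which it reuses in the remark following the theorem to conclude that the $G_2$-structure is generic with respect to the Fern\'andez--Gray classes (no torsion component vanishes identically); your alternative alone would not give that. One minor caution: the identity you invoke, $\star_\varphi(\tau_1\wedge\star_\varphi\varphi)=\iota_{\tau_1^\sharp}\varphi$, is convention-sensitive in its signs (compare the paper's stated value $-\tfrac16(d+c)(e^{12}+e^{47}+e^{56})$ with $\iota_{\tau_1^\sharp}\varphi=-\tfrac16(d+c)(e^{12}-e^{47}-e^{56})$), but since your argument only uses that this $2$-form is supported on the pairs $(1,2),(4,7),(5,6)$, nothing is affected.
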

\begin{proof} 
We compute $\nabla_{e_i} e_i$ and $\nabla_{e_i} e_j$ using Koszul's formula. \[ 2\la \nabla_{e_i} e_j, e_k\ra_\varphi =\la [e_i,e_j],e_k\ra_\varphi-\la [e_j,e_k],e_i\ra_\varphi+\la [e_k,e_i],e_j\ra_\varphi\quad \forall i,j,k.\]

We obtain that $\nabla_{e_i} e_i=0$ for all $1\leq i\leq 7$ and 
\begin{align*} 
	\nabla_{e_1} e_4= a e_5&, \; \nabla_{e_1} e_5=-a e_4, \; \nabla_{e_1} e_6= b e_7, \; \nabla_{e_1} e_7=-b e_7\\
	\nabla_{e_2} e_4=c e_5&, \; \nabla_{e_2} e_5=-c e_4, \; \nabla_{e_2} e_6=d e_7, \; \nabla_{e_2} e_7=-d e_6  \end{align*}
Plugging $\nabla_{e_i} e_i=0$ for all $1\leq i\leq 7$ into equation \eqref{div2} we have
\[\la \div(T_{\varphi}),e_j\ra_{\varphi}=- \sum_{i=1}^7 T_{\varphi}(e_i, \nabla_{e_i} e_j).\] For $1\leq j\leq 3$, it is clear that $\la \div(T_{\varphi}),e_j\ra_{\varphi}=0$. 

Let us compute now the other components.
\begin{align*} 
	\la \div(T_{\varphi}),e_4\ra_{\varphi}&=-T_{\varphi}(e_1,\nabla_{e_1} e_4)-T_{\varphi}(e_2,\nabla_{e_2} e_4)=-a T_{\varphi}(e_1, e_5)-c T_{\varphi}(e_2,e_5)\\
	\la \div(T_{\varphi}),e_5\ra_{\varphi}&=-T_\varphi(e_1, \nabla_{e_1} e_5)-T_\varphi(e_2,\nabla_{e_2} e_5)=a T_\varphi(e_1, e_4)+c T_\varphi(e_2, e_4)\\
	\la \div(T_{\varphi}),e_6\ra_{\varphi}&=-T_\varphi(e_1,\nabla_{e_1} e_6)-T_\varphi(e_2, \nabla_{e_2} e_6)=-b T_\varphi(e_1, e_7)-d T_\varphi(e_2,e_7)\\
	\la \div(T_{\varphi}),e_7\ra_{\varphi}&=-T_\varphi(e_1,\nabla_{e_1} e_7)-T_\varphi(e_2, \nabla_{e_2} e_7)=b T_\varphi(e_1, e_6)+d T_\varphi(e_2,e_6)
\end{align*} 

Recall that \[T_\varphi=\frac{\tau_0}{4} \la \cdot,\cdot\ra_\varphi-\star_\varphi(\tau_1\wedge \star_\varphi\varphi)-\frac{1}{2}\tau_2-\frac{1}{4} \jmath(\tau_3),\]
where $\jmath(\tau_3)(e_i,e_j)=\star_\varphi(\iota_{e_i} \varphi \wedge \iota_{e_j} \varphi \wedge \tau_3)$. Since $\{e_i\}_{i=1}^7$ is an orthonormal basis, the first term vanishes automatically. Note also from the formula for $\tau_2$ of Proposition \ref{torsionforms} that $\tau_2(e_1,e_j)=\tau_2(e_2,e_j)=0$ for $4\leq j\leq 7$.

We compute $\star_\varphi(\tau_1\wedge \star_\varphi\varphi)$ and $\jmath(\tau_3)$ (using the formulas from Proposition \ref{torsionforms}):
\begin{align*} 
	\tau_1 \wedge \star_\varphi\varphi&=-\frac{1}{6}(d+c) e^3 \wedge (-e^{1247}-e^{1256}-e^{1346}+e^{1357}+e^{2345}+e^{2367}+e^{4567})\\
	&=\frac{1}{6}(d+c) (e^{12347}+e^{12356}-e^{34567}).
	\end{align*}
Therefore \[\star_\varphi(\tau_1\wedge \star_\varphi\varphi)=-\frac{1}{6}(d+c)(e^{12}+e^{47}+e^{56}).\] Note that $\star_\varphi(\tau_1\wedge \star_\varphi\varphi)(e_1,e_j)=\star_\varphi(\tau_1\wedge \star_\varphi\varphi)(e_2,e_j)=0$ for $4\leq j\leq 7$.

Now, the interior products $\iota_{e_j} \varphi$, $1\leq j\leq 7$ are given by
\begin{align*} \iota_{e_1} \varphi=e^{23}+e^{45}+e^{67}&,\qquad \iota_{e_2} \varphi= -e^{13}+e^{46}-e^{57},\qquad 
	\iota_{e_3} \varphi=e^{12}-e^{47}-e^{56},\\
	\iota_{e_4} \varphi=-e^{15}-e^{26}+e^{37}&,\qquad \iota_{e_5} \varphi=e^{14}+e^{27}+e^{36},\qquad \iota_{e_6} \varphi=-e^{17}+e^{24}-e^{35},\\
	\iota_{e_7} \varphi=e^{16}-e^{25}-e^{34}&.
\end{align*} 
Hence, 
\[ \iota_{e_1} \varphi \wedge \tau_3=\frac{8}{7}(b+a)(e^{12345}+e^{12367}+e^{14567})+\frac{1}{2}(d+c)(e^{12346}-e^{12357}+2 e^{24567}),\]
\[ \iota_{e_2} \varphi \wedge \tau_3=\frac{1}{2}(d+c)(e^{12345}+e^{12367}+2e^{14567})+\frac{1}{7}(b+a)(e^{12346}-e^{12357}-6e^{24567}).\]
From this we have \[\jmath(\tau_3)(e_1,e_j)=\jmath(\tau_3)(e_2,e_j)=0,\quad \text{for} \;4\leq j\leq 7.\] 
In conclusion, $T_\varphi(e_1,e_j)=T_\varphi(e_2,e_j)=0$ for $4\leq j\leq 7$. Therefore, $\operatorname{div} T_\varphi=0$.
\end{proof}
\begin{obs}
	The 45 flat solvmanifolds of Table 1 can be obtained choosing values of $(a,b,c,d)$ such that $a\neq b$ and $c\neq d$. Indeed, instead of taking $A,B$ we take $A$ and $AB$, which corresponds to the values $(a,b),((a+c),(b+d))$. It can be easily deduced for the values of Table 1 that $a\neq b$ and $a+c\neq b+d$. This choice will induce an isomorphic lattice because $\la A,B\ra=\la A,AB\ra$. In this way, the invariant $G_2$-structure on the corresponding 45 flat solvmanifolds is divergence-free and it is a generic $G_2$-structure respect to Gray-Fernández classes, since none of the components of the torsion vanishes. 
\end{obs}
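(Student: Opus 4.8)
The plan is to evaluate the seven components $\la \div T_\varphi, e_j\ra_\varphi$ for $j=1,\ldots,7$ directly from the Lie-algebra divergence formula \eqref{div2}, reducing everything to a small number of entries of the full torsion tensor. First I would apply Koszul's formula to the brackets of $\g_{a,b,c,d}$ to obtain the Levi-Civita connection $\nabla$. The crucial structural feature I expect is that $\nabla_{e_i} e_i = 0$ for every $i$, which makes the first sum in \eqref{div2} disappear, leaving $\la \div T_\varphi, e_j\ra_\varphi = -\sum_{i=1}^7 T_\varphi(e_i, \nabla_{e_i} e_j)$. Since the only nonzero connection coefficients $\nabla_{e_i} e_j$ arise from $i\in\{1,2\}$ acting on $j\in\{4,5,6,7\}$, the components $j=1,2,3$ vanish for free, and each of the remaining four components becomes a linear combination (with coefficients among $a,b,c,d$) of the eight numbers $T_\varphi(e_1,e_k)$ and $T_\varphi(e_2,e_k)$ with $4\le k\le 7$.

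Thus the theorem collapses to the single claim that $T_\varphi(e_1,e_j)=T_\varphi(e_2,e_j)=0$ for $4\le j\le 7$. To prove this I would expand $T_\varphi$ via its defining expression \eqref{torsion} and handle its four summands in turn, substituting the explicit torsion forms from Proposition \ref{torsionforms}. The metric term $\frac{\tau_0}{4}\la\cdot,\cdot\ra_\varphi$ vanishes on these pairs because $\{e_i\}$ is orthonormal; the term $-\frac{1}{2}\tau_2$ vanishes because $\tau_2=-(d+c)(e^{47}+e^{56})$ has no $e^{1k}$ or $e^{2k}$ summand; and a short computation of $\tau_1\wedge\star_\varphi\varphi$ gives $\star_\varphi(\tau_1\wedge\star_\varphi\varphi)=-\frac{1}{6}(d+c)(e^{12}+e^{47}+e^{56})$, whose $(e_1,e_j)$ and $(e_2,e_j)$ entries likewise vanish for $j\ge 4$.

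The one genuinely laborious step, and the main obstacle, is the $\jmath(\tau_3)$ term, since $\jmath(\tau_3)(e_i,e_j)=\star_\varphi(\iota_{e_i}\varphi\wedge\iota_{e_j}\varphi\wedge\tau_3)$ mixes all monomials of $\tau_3$. My plan is to avoid assembling the whole symmetric tensor and instead compute only the two 5-forms $\iota_{e_1}\varphi\wedge\tau_3$ and $\iota_{e_2}\varphi\wedge\tau_3$, using the list of interior products $\iota_{e_k}\varphi$. I expect these 5-forms to involve only the six monomials $e^{12345},e^{12367},e^{14567},e^{12346},e^{12357},e^{24567}$, whose complementary index pairs are $\{6,7\},\{4,5\},\{2,3\},\{5,7\},\{4,6\},\{1,3\}$. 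Since none of these pairs occurs among the 2-form monomials of $\iota_{e_j}\varphi$ for $4\le j\le 7$, no product $\iota_{e_i}\varphi\wedge\iota_{e_j}\varphi\wedge\tau_3$ can fill out $e^{1\ldots 7}$, forcing $\jmath(\tau_3)(e_1,e_j)=\jmath(\tau_3)(e_2,e_j)=0$. Combining the four vanishing summands yields $T_\varphi(e_1,e_j)=T_\varphi(e_2,e_j)=0$ for $4\le j\le 7$, and hence $\div T_\varphi=0$ for every $(a,b,c,d)$.
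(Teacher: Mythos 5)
Your computation is correct as far as it goes, but it proves the wrong statement: what you establish is the preceding theorem, namely that $\operatorname{div} T_\varphi=0$ on $\g_{a,b,c,d}$ for \emph{every} choice of $(a,b,c,d)$, and you do so by essentially the same route as the paper's proof of that theorem (Koszul's formula, $\nabla_{e_i}e_i=0$, reduction to the vanishing of $T_\varphi(e_1,e_j)$ and $T_\varphi(e_2,e_j)$ for $4\le j\le 7$, then term-by-term vanishing of the four summands of \eqref{torsion} using Proposition \ref{torsionforms}; your index-pair argument for the $\jmath(\tau_3)$ term is a nice way to organize that step). However, at the point where the remark is stated, this theorem is already available and is simply cited; re-deriving it contributes nothing to the remark itself.

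The actual content of the remark, which your proposal does not address at all, is twofold. First, the realization step: the 45 solvmanifolds of Table 1 are listed with parameters satisfying $c=-d$, so for those values Proposition \ref{torsionforms} gives $\tau_1=\tau_2=0$ and the invariant structure is merely coclosed, \emph{not} generic. The remark fixes this by changing the generating set of the holonomy group from $\{A,B\}$ to $\{A,AB\}$; since $\langle A,B\rangle=\langle A,AB\rangle$, the lemma on lattices associated to the same (conjugate) subgroup together with Mostow's theorem shows this produces the same solvmanifold, while the parameters become $(a,b)$ and $(a+c,b+d)$. Second, the genericity step: one must check, from the explicit formulas of Proposition \ref{torsionforms}, that for the new parameters none of $\tau_0,\tau_1,\tau_2,\tau_3$ vanishes. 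Concretely, $\tau_0$ is proportional to $a+b$ and $\tau_1,\tau_2$ to the sum of the new second pair, which equals $(a+c)+(b+d)=(a+b)+(c+d)=a+b$ because $c=-d$; $\tau_3\neq 0$ follows from either. So everything reduces to a non-vanishing condition that must be verified against the entries of Table 1 (it holds, since all the listed $a,b$ are positive). Without the generator-change argument and this verification, the claim that the 45 solvmanifolds carry a divergence-free $G_2$-structure that is \emph{generic} in the Fern\'andez--Gray sense is simply not proved; divergence-freeness alone, which is all your proposal yields, holds for every parameter choice and cannot distinguish the generic realization from the coclosed one.
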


	\
	
\end{document}